\theoremstyle{plain}
\newtheorem{theorem}{Theorem}[section]
\newtheorem{lemma}[theorem]{Lemma}
\newtheorem{corollary}[theorem]{Corollary}
\newtheorem{proposition}[theorem]{Proposition}
\theoremstyle{definition}
\newtheorem{hypothesis}[theorem]{Hypothesis}
\newtheorem{remark}[theorem]{Remark}
\numberwithin{equation}{subsection}
\newcommand{\cc}{{\mathbb C}}
\newcommand{\pp}{{\mathbb P}}
\newcommand{\rr}{{\mathbb R}}
\newcommand{\zz}{{\mathbb Z}}
\newcommand{\Gm}{{\mathbb G}_{\rm m}}
\newcommand{\aaaa}{{\mathbb A}}
\newcommand{\Oo}{{\mathcal O}}
\newcommand{\srG}{{\mathscr G}}
\title[Twistor space for local systems]{Twistor space for local systems on an open curve}
\author{Carlos Simpson}
\date{} 
\keywords{Connection, Formal neighborhood, Groupoid, Harmonic bundle, Hecke transformation, Higgs bundle, Logarithmic differential,  Moduli space,  Representation, Residue, Riemann-Hilbert correspondence, Twistor space}
\subjclass[2010]{Primary 14D21; Secondary  14F35, 22A22, 32J25}
\begin{document}

\begin{abstract}
Let $X=\overline{X}-D$ be a smooth quasi-projective curve. In \cite{tgps} we constructed
a Deligne-Hitchin modui space with Hecke gauge groupoid for connections of rank $2$. 
We extend this construction to the case of any rank $r$, although still keeping 
a genericity hypothesis. The formal neighborhood of a preferred section
corresponding to a tame harmonic bundle is governed by a mixed twistor structure. 
\end{abstract}

\maketitle

\section{Introduction}

Let $X$ be a smooth quasi-projective curve with basepoint $x\in X$.  Consider
the  smooth compactification
$j: X\hookrightarrow \overline{X}$ and set $D:= \overline{X} -X$.  Denote the points of $D$
by $t_1,\ldots , t_k$. A solution of Hitchin's equations over $X$
may be viewed as a harmonic bundle 
$(E,\partial , \overline{\partial}, \varphi , \overline{\partial},h )$ and we assume
it is tame: the eigenvalues of $\varphi$ have at most first-order poles at the $t_i$. 
This yields
a parabolic Higgs bundle, and on the other hand a
vector bundle with flat connection that is also furnished with a parabolic
structure. More generally, for any complex parameter 
$\lambda \in \aaaa^1$ we obtain a vector bundle $F^{\lambda}$ with 
parabolic structure and a $\lambda$-connection $\nabla ^{\lambda}$ compatible
with the parabolic structure. As $\lambda$ varies, these objects vary holomorphically,
except for the variation of parabolic levels (i.e. weights) that is real analytic. 

Recall 
that the space $\aaaa^1$ of values of $\lambda$ 
extends to $\pp^1$, the twistor line \cite{Hitchin,HKLR,twistor,Sabbah,Mochizuki,weight2,
AlfayaGomez,BiswasHellerRoser,BeckHellerRoser,tgps}. 
We would like to create a family of moduli spaces over $\pp^1$ such that
the collection of $(F^{\lambda}_{\cdot } , \nabla ^{\lambda})$, extended to
the chart at $\infty$ by the Deligne glueing, provides a {\em preferred section} corresponding
to the harmonic bundle. 

This should generalize Hitchin's original construction \cite{Hitchin}, 
with a twistor-like property to be formulated
in terms of the notion of mixed twistor structure combining weights $1$ and $2$, also with a component
of weight $0$ for the variation in choice of framing. 

The parabolic levels of $E^{\lambda}_{\cdot }$ and the eigenvalues of the
residue of $\nabla^{\lambda}$ vary in $r$ families 
$({\mathfrak p}_{\lambda},{\mathfrak e}_{\lambda})$ 
governed by the 
Sabbah-Mochizuki formulas 
\eqref{frakep}. In view of the formula
for ${\mathfrak p}$, we see that the ordering of the parabolic levels is not invariant
as $\lambda$ moves in $\aaaa^1$. On the other hand, from 
\eqref{frakep} for ${\mathfrak e}$ there also exist 
points where two eigenvalues
come together, and there we do need the filtrations of the 
parabolic structure in order to define a good moduli problem. The reader may refer to Sabbah's picture 
in \cite{Sabbah}. 

In \cite{tgps} we explained how to get around this problem in the case of bundles of rank $2$. 
The idea was to notice that the two types of problems: when parabolic levels cross over,
and where residual eigenvalues come together, don't occur at the same time, if we
make a global distinctness assumption on the KMS spectrum elements. 
At points where the eigenvalues are distinct, we can freely change the order of the eigenspaces in the
parabolic filtration. We therefore defined the moduli space as a quotient by the {\em Hecke gauge groupoid} allowing permutation of orderings of the filtration steps, at points
where the eigenvalues are distinct. 

A general discussion of the idea that there is a weight $2$ nonabelian Hodge structure
governing the KMS spectrum was done in \cite{weight2} for the case of rank $1$, and 
in \cite{tgps} for the case of rank $2$. We refer to \cite{tgps}
for general remarks about this point of view, 
as well as for various aspects that don't need to be
repeated here. 

The goal of this paper is to explain how to 
extend these considerations from rank $\leq 2$  to the case of bundles of
arbitrary rank. For this, we need to provide a precise definition of the moduli spaces
involved---relatively straightforward---and of the appropriate Hecke gauge groupoid. 
In the rank $2$ case a simplified description of the gauge groupoid was made possible
because there were only two eigenvalues to take care of. In the present case, the basic 
idea of the definition is again to say that two eigenspaces may be permuted in the ordering of the
filtrations, whenever they have distinct residual eigenvalues. We need to check that the
resulting definition is coherent when several such operations are composed. 

\begin{theorem}
\label{maintheorem}
There is a smooth but non-separated analytic groupoid 
$$
( \widetilde{M}_{\rm DH}, \, \srG _{\rm DH} )
\rightarrow \pp^1
$$
whose chart
over the standard $\aaaa^1\subset \pp^1$
parametrizes rank $r$ quasi-parabolic bundles with logarithmic $\lambda$-connection
on $(\overline{X},D)$ and framing at the basepoint $x\in X$, up to
local gauge transformations over $D$; the other chart is  
the same kind of space for the complex conjugate curve. A tame harmonic bundle on $X$ 
satisfying certain genericity hypotheses yields
a well-defined preferred section $\rho$. In turn, $\rho$ has a separated smooth analytic
neighborhood. The family of complete local rings of 
the formal completion of this neighborhood along $\rho$ is a mixed twistor structure,
such that the mixed twistor structure on the normal bundle has weights $0,1,2$. 
\end{theorem}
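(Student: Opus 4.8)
The plan is to follow the rank $2$ construction of \cite{tgps} step by step, upgrading the single transposition that sufficed there to the combinatorics of the symmetric group $S_r$, and then to read off the weight structure on the normal bundle by combining the tame nonabelian Hodge correspondence in the ``interior'' directions with the rank $1$ analysis of \cite{weight2} along each KMS line. First I would set up the chart over $\aaaa^1$: for each $\lambda\in\aaaa^1$ one has the functor of rank $r$ quasi-parabolic bundles $(F_\cdot,\nabla)$ on $(\overline{X},D)$ carrying a logarithmic $\lambda$-connection compatible with the parabolic structure and a framing of the fibre at $x$, modulo the groupoid of gauge transformations supported on the formal neighborhoods of the points of $D$; following \cite{tgps} this is represented by a smooth analytic space (the framing rigidifies the objects, and there are no obstructions for the framed problem on the open curve). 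Letting $\lambda$ vary gives a smooth analytic space over $\aaaa^1$, and gluing the analogous chart for the complex conjugate curve at $\infty$ by the Deligne gluing over $\aaaa^1\cap(\pp^1-\{0\})$ assembles $\widetilde{M}_{\rm DH}\to\pp^1$ together with the tautological ``change of ordering'' pseudo-action at the points of $D$. This part is, as the introduction says, relatively straightforward.

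The substantive construction is the Hecke gauge groupoid $\srG_{\rm DH}$. For each $t_i\in D$, let $U_i\subset\pp^1$ be the locus where the residue of $\nabla^\lambda$ at $t_i$ has pairwise distinct eigenvalues; by the Sabbah--Mochizuki formulas \eqref{frakep} this is the complement of finitely many points. Over $U_i$ there is a well-defined action of $S_r$ permuting the steps of the quasi-parabolic filtration at $t_i$, realized by local gauge transformations over $t_i$, and I would define $\srG_{\rm DH}$ to be the groupoid generated by these partially-defined actions over all $i$. The real work is coherence: one must check that the composites of the elementary transpositions $(\ell,\ell+1)$ obey the braid and commutation relations of $S_r$ and that the corresponding local gauge transformations compose correctly, so that $\srG_{\rm DH}$ is a genuine smooth, non-separated analytic groupoid rather than merely a pseudo-groupoid. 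I expect to reduce each relation to an identity among holomorphic gauge transformations that holds on a dense connected open subset of $\pp^1$ (a finite intersection of the $U_i$, still the complement of finitely many points) and then invoke analytic continuation. Non-separatedness follows at once, since the anchor $\srG_{\rm DH}\to\widetilde{M}_{\rm DH}\times_{\pp^1}\widetilde{M}_{\rm DH}$ does not have closed image near the points where two eigenvalues collide. This is the step I expect to be the main obstacle: several permutations at different points of $D$, each valid only off a finite set, must be organized simultaneously, and this has no rank $2$ analogue.

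Next I would produce the preferred section. A tame harmonic bundle satisfying the genericity hypotheses yields, via Sabbah--Mochizuki, the parabolic $\lambda$-connection $(F^\lambda_\cdot,\nabla^\lambda)$ with parabolic levels $\mathfrak p_\lambda$ and residual eigenvalues $\mathfrak e_\lambda$ of \eqref{frakep}, the harmonic metric supplies a framing at $x$, and the conjugate harmonic bundle extends everything over $\infty$; Deligne gluing assembles these into a section $\rho$. To see that $\rho$ descends to $\widetilde{M}_{\rm DH}/\srG_{\rm DH}$, note that the ordering of the filtration dictated by the weights $\mathfrak p_\lambda$ jumps exactly across the loci where two of the $\mathfrak p_\lambda$ cross; by the global distinctness of the KMS spectrum the eigenvalues $\mathfrak e_\lambda$ are distinct there, so each such jump is a morphism of $\srG_{\rm DH}$, whereas at the loci where two $\mathfrak e_\lambda$ come together the weights are distinct and the filtration is canonically ordered. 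Hence $\rho$ is a holomorphic section (well defined up to the framing degree of freedom, which reappears below). Taking a transverse analytic slice to the $\srG_{\rm DH}$-orbits through $\rho$ and shrinking it so that no two of its points over a given $\lambda$ are $\srG_{\rm DH}$-related except trivially --- possible because $\srG_{\rm DH}$ is \'etale with controlled isotropy along $\rho$ --- yields a separated smooth analytic neighborhood through which $\rho$ factors.

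Finally I would compute the formal completion of that neighborhood along $\rho$. The deformation theory is governed, for each $\lambda$, by the hypercohomology $\hh^1$ of the deformation complex of a parabolic $\lambda$-connection with framing modulo gauge at $D$; assembled over $\pp^1$ through the $\aaaa^1$-chart and its conjugate, this is the normal bundle $N_\rho$. Using the harmonic metric, the complex splits functorially into a framing part, an interior part, and a KMS part. The framing part is independent of $\lambda$, hence a pure twistor structure of weight $0$, a sum of copies of $\Oo_{\pp^1}$; the interior part is the $\hh^1$ of the harmonic bundle with adjoint coefficients, which by the tame nonabelian Hodge correspondence is a pure twistor structure of weight $1$, a sum of copies of $\Oo_{\pp^1}(1)$; and the KMS part, governed by \eqref{frakep}, is --- by the rank $1$ analysis of \cite{weight2} applied along each KMS line at each point of $D$ --- a pure twistor structure of weight $2$, a sum of copies of $\Oo_{\pp^1}(2)$. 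These assemble into a weight filtration $W_0\subset W_1\subset W_2=N_\rho$ with graded pieces of weights $0,1,2$. Since the obstruction maps of the formal Kuranishi problem are induced by the cup product on the twistor DGLA carrying the harmonic metric, the entire formal completion is a mixed twistor structure, which finishes the proof. The delicate point here is the weight $2$ claim for the KMS part in rank $r$: one must verify that the Sabbah--Mochizuki variation of the $r$-tuple $(\mathfrak p_\lambda,\mathfrak e_\lambda)$, now with crossings, still defines a pure weight $2$ twistor structure and that its extension against the interior weight $1$ part is the expected one --- and this is exactly where the genericity hypotheses enter, to keep the KMS lines from interacting with the interior deformations at the crossing points.
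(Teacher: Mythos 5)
Your outline has the right overall shape, but three steps are carried by assertions that do not work as stated. First, the definition of the Hecke gauge groupoid: you take, for each $t_i\in D$, an open set $U_i\subset \pp^1$ where the residue has distinct eigenvalues, ``the complement of finitely many points by \eqref{frakep}''. The formulas \eqref{frakep} govern only the KMS data of a \emph{fixed} harmonic bundle, i.e.\ along the preferred section; for the moduli problem the eigenvalues $\theta_{t,i}$ are functions on the moduli space, and the domain of the transposition exchanging steps $i,i+1$ at $t$ is the open subset of $\widetilde{M}_{\rm Hod}$ where $\theta_{t,i}\neq\theta_{t,i+1}$, not a subset of $\pp^1$. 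Consequently your coherence argument (verify $S_r$ relations on a dense connected open subset of $\pp^1$ and analytically continue) has nothing to continue along: the composable words are partial automorphisms of the moduli space with varying domains, and the genuine issue is not whether braid relations hold but whether two words that agree at a point agree as germs, so that the groupoid generated by them is \'etale with injective anchor. In the paper this is exactly Proposition \ref{injectivity}, and it is proved not by continuation but by transporting the question through the Riemann--Hilbert correspondence (Proposition \ref{rhc}, Corollary \ref{rhcHod}) to the Betti groupoid, where the eigenvalue map $A:M_B\rightarrow\aaaa^{rk}$ and the combinatorics of the $(U_\sigma,P_\sigma)$ give injectivity and the diagonal property (Corollary \ref{diagonal}). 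Note also that transpositions alone do not suffice as generators: the Hecke rotations $H_t$ and twists $U_t$ are needed (they shift $\theta$ by $\lambda$), both to compare the different Sabbah--Mochizuki choices of level $b$ and to reduce to Deligne's normalization in the proof of Proposition \ref{injectivity}.

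Second, the separated neighborhood of $\rho$ is not obtained by ``taking a transverse slice to the $\srG_{\rm DH}$-orbits'': the groupoid is \'etale, so orbits are discrete and there is no slice to take; the difficulty is Hausdorffness of a neighborhood of a compact section inside a non-separated space, and the paper's Proposition \ref{cpt-nbd} handles it by an induction on charts, a relatively compact intermediate piece $W'$, separation of boundary sets, and a pushout --- an argument that genuinely uses the $\srG$-invariant map to $\pp^1$ (the statement is false without it, as remarked there). Your one-sentence claim skips this entirely. Third, for the formal completion your appeal to obstruction maps of a Kuranishi problem and cup products on a ``twistor DGLA'' misses the actual content: under the genericity hypotheses the space is already smooth, so there are no obstructions to control; what must be produced is a weight filtration on the completed local rings $A_n=\Oo_M/{\mathfrak m}^{n+1}$ and a proof that it is a mixed twistor structure. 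The paper does this concretely, using the ideal $I$ of the $GL_r(\cc)$-orbit of framings and the ideal $J$ of functions with vanishing $W_{1,N}$-derivatives, setting $W_{-k}A_n=\sum_{q+2r\geq k}I^qJ^rA_n$, and showing in local coordinates that ${\rm Sym}^d\,{\rm gr}^1_U\cong{\rm gr}^d_U$ as filtered bundles over $\pp^1$, so that $A_n$ is a successive extension of mixed twistor structures. Your splitting of the normal bundle into pure pieces of weights $0,1,2$ via the harmonic metric is also stronger than what is claimed (the paper only asserts the filtration, and defers the weight computation to the rank $2$ argument of \cite{tgps} and the twistor ${\mathcal D}$-module theory of \cite{Mochizuki,Sabbah}); if you want to argue directly you would still need to prove that the graded pieces are $\Oo_{\pp^1}(k)^{\oplus n_k}$, which your sketch does not do for the weight $2$ (KMS) part beyond invoking \cite{weight2} in rank one.
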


We use moduli spaces of framed local systems and
the weight $0$ piece of the normal bundle corresponds to
changes of framing. 
The weight $2$ piece
corresponds to the variation of KMS spectrum. We refer to 
\cite{weight2,tgps} for a discussion of the relationship between the 
the weight $2$ property and the formulas \eqref{frakep} for
${\mathfrak p}$ and ${\mathfrak e}$.
The weight $1$ piece corresponds to having hyperkähler structures
\cite{Nakajima,Holdt}
on symplectic
leaves of the moduli space. 

The mixed
twistor property for the full formal completion around a preferred section
generalizes, to a certain extent, the
mixed Hodge structure on the completion of the space of representations at
a VHS \cite{EyssidieuxSimpson,EKPR,Pridham, Lefevre1,Lefevre2,GreenGriffithsKatzarkov}. However, 
we restrict to $X$ being a curve, and the genericity hypotheses in effect lead to a
smooth moduli space, so the local structure here is simpler
than in the previous references.

It is a great pleasure to dedicate this
paper to Oscar García-Prada. Exploring the world of gauge theory, representations of
fundamental groups, Higgs bundles and the like, the influence of Oscar's ideas is always felt. 
His contributions have notably opened up many 
potential future perspectives where Hitchin's twistor viewpoint
started in \cite{Hitchin,HKLR} should be relevant. These include
cohomology calculations for parabolic moduli spaces 
\cite{OGPGothenMunoz}, relations with Lie theory for real and complex
groups \cite{BiquardOGPMiR,OGPLogaresMunoz}, the Cayley correspondence
\cite{CayleyCorr}, actions of mapping class
groups \cite{OGPWilkin} and a fascinating but mostly untouched
theory of parabolic vortex equations \cite{ACOGP,BiquardOGP}. It is hoped that the
present paper will provide some tools for the development of twistor theory
in these directions.

\section{Three structures}

We'll be talking about three different but related types of structures: tame harmonic bundles,
parabolic logarithmic $\lambda$-connections, and quasi-parabolic logarithmic 
$\lambda$-connections. 
Recall that (quasi-)parabolic structures were introduced by Seshadri  \cite{Seshadri}
with progress by many authors over the subsequent decades. 

We make the convention that quasi-parabolic objects have filtrations that
are {\em complete}, i.e. the successive quotients have dimension $1$, whereas parabolic objects can have
more general filtrations. 

Starting from a tame harmonic bundle we obtain for any $\lambda \in \aaaa^1$ a
parabolic logarithmic $\lambda$-connection. If $\lambda = 0$ this is the Higgs bundle and
if $\lambda = 1$ it is the flat bundle. One problem with this construction is that it doesn't
vary nicely as a function of $\lambda$. The moduli space theory would be more difficult to develop
too, because of the higher-rank quotients in the filtrations and therefore the possibility of residues
with non-trivial nilpotent parts. We therefore pass to quasi-parabolic structures for the moduli problem,
making a genericity hypothesis \ref{hypHB} that the KMS spectrum elements occur with
multiplicity $1$, to insure that there is a well-controlled way of
extracting a quasi-parabolic structure. Investigation of this process and the corresponding equaivalence
relation that needs to be put on the moduli spaces, will be the main part of this work. 

\subsection{Quasi-parabolic structures}
\label{qp-struct}

Consider the category $QPL(X,\lambda ,r)$ consisting of objects
$(\lambda , F, \nabla , V )$ where:
\begin{itemize}

\item 
$F$ be a vector bundle of rank $r$ on $\overline{X}$;

\item 
$\nabla : F \rightarrow F\otimes \Omega ^1_Y(\log D)$ is a 
logarithmic $\lambda$-connection, i.e. an operator
such that $\nabla (ae) = a\nabla (e) + \lambda d(a)e$, in particular
at each point $t\in D$ 
we obtain the {\em residue}
${\rm res}_t(\nabla ) : F_t \rightarrow F_t$;

\item
$V$ is a collection of filtrations for each $t\in D$ 
$$
0 = V_{t,0} \subset V_{t,1} \subset \cdots \subset V_{t,r} = F_t
$$
such that the quotients ${\rm gr}_{t,j}:= V_{t,j} / V_{t,j-1}$, are $1$-dimensional,  and such that 
$$
{\rm res}_t(\nabla ) : V_{t,j} \rightarrow V_{t,j}.
$$
\end{itemize}
These will be called {\em quasi-parabolic 
logarithmic $\lambda$-connections of rank $r$}
\cite{BhosleRamanathan,Biswas,BiswasInabaKomyoSaito,BodenYokogawa,
InabaIwasakiSaito,Konno,MaruyamaYokogawa,MehtaSeshadri,Nakajima,
NitsureLog,Seshadri,Singh}.

Morphisms are those which preserve all the structures. 
A {\em framed object} is an object of the category provided with
a framing $\beta : F_x \stackrel{\cong}{\rightarrow} \cc^r$. 

We obtain the vector of residual eigenvalues at $t$ by letting  
$\theta _{t,j}(F,\nabla , V_{\cdot} )\in \cc$ 
be the scalar giving the action of ${\rm res}_t(\nabla )$ on 
${\rm gr}_{t,j}$. 

The constructions \cite{BalajiSeshadri, BhosleRamanathan,BiquardOGPMiR,BodenYokogawa,
OGPLogaresMunoz,
Konno,MaruyamaYokogawa}
of moduli spaces of parabolic Higgs bundles use parabolic levels
to define stability. In the quasi-parabolic setting we don't have those parameters.
At the present stage, we therefore include an hypothesis over $\lambda =0$
that guarantees stability for any choice of levels.

\begin{hypothesis}
\label{hypFQPLLC}
For the construction of our moduli
problem, we consider objects $(\lambda , F, \nabla , V)\in QPL(X,\lambda ,r)$
that have only scalar endomorphisms in
the category, and furthermore assume
when $\lambda = 0$ that the Higgs bundle has an irreducible spectral curve.
\end{hypothesis}

With these assumptions as in \cite[Hypothesis 5]{tgps}, 
the moduli problem is unobstructed and framed objects have no endomorphisms.

\begin{theorem}
\label{modulispace}
There is a smooth and separated moduli space,
locally of finite type \cite{Herrero}
$$
\widetilde{M}_{\rm Hod}(X) \stackrel{\lambda}{\longrightarrow} \aaaa ^1.
$$
parametrizing objects 
satisfying Hypothesis \ref{hypFQPLLC}.
\end{theorem}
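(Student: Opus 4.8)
The plan is to realize $\widetilde M_{\rm Hod}(X)$ as a fine moduli space, using the fact recorded above that under Hypothesis \ref{hypFQPLLC} a framed object has no nonzero endomorphism, and hence trivial automorphism group. One sets up the moduli functor sending a scheme $S$ over $\aaaa^1$ to the set of tuples $(F,\nabla ,V,\beta )$ with $F$ a rank $r$ bundle on $\overline{X}\times S$ flat over $S$, $\nabla$ a relative logarithmic $\lambda$-connection, $V$ a relative complete quasi-parabolic filtration along $D\times S$ that is stable under ${\rm res}(\nabla )$, and $\beta$ a framing along $\{x\}\times S$, subject to the conditions of Hypothesis \ref{hypFQPLLC} fibrewise. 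Fixing the degree of $F$ and a quasi-compact region for the residual eigenvalues, the underlying bundles-with-$\lambda$-connection form a bounded family, so after twisting $F$ by $\Oo_{\overline{X}}(n\cdot x)$ for $n\gg 0$ they are parametrized by a locally closed subscheme $Q$ of a Quot scheme. Over $Q$ the $\lambda$-connection ranges over a scheme affine over ${\rm Hom}(F,F\otimes \Omega ^1_{\overline{X}}(\log D))$, the ${\rm res}(\nabla )$-stable complete flags along $D$ form a closed subscheme of a relative flag bundle, and the framing is a section of the $GL_r$-torsor ${\rm Isom}(F|_{\{x\}},\cc^r)$. On the open locus where the fibrewise hypotheses hold, the general linear group underlying the Quot construction acts freely---precisely because a framed object has trivial automorphism group---so the geometric quotient is an algebraic space; by \cite{Herrero} it is a scheme, locally of finite type, and these charts glue over the discrete invariants and over $\aaaa^1$ to yield $\widetilde M_{\rm Hod}(X)\to \aaaa^1$.

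For smoothness, one checks that the tangent--obstruction theory of a framed object is governed by the hypercohomology $\hh ^\bullet$ on $\overline{X}$ of a two-term complex $\bigl[\, {\rm End}_{\rm par}(F)(-x) \to {\rm End}(F)\otimes \Omega ^1_{\overline{X}}(\log D)\,\bigr]$ built from the induced $\lambda$-connection on ${\rm End}(F)$, where ${\rm End}_{\rm par}(F)$ are the endomorphisms preserving $V$, the twist by $-x$ imposes that the framing at $x$ be respected, and the residue constraint along $D$ enters via the appropriate parabolic modification of the target. Triviality of automorphisms gives $\hh ^0 = 0$, and the unobstructedness asserted under Hypothesis \ref{hypFQPLLC}, as in \cite[Hypothesis 5]{tgps}, gives $\hh ^2 = 0$; hence the functor is formally smooth, so $\widetilde M_{\rm Hod}(X)$ is smooth, of dimension $-\chi (\overline{X},\hh ^\bullet )$ at each point.

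Separatedness is the main point, and the step I expect to require the most care. One verifies the valuative criterion: over a discrete valuation ring $R$ with fraction field $K$, two $R$-families whose restrictions to $K$ are isomorphic compatibly with their framings must themselves be isomorphic over $R$. Following the pattern of the rank $2$ argument in \cite{tgps}, one extends the generic isomorphism to a nonzero map between the $R$-flat bundles after clearing denominators, bounds the order of the resulting pole using that the framings at $x$ are defined over all of $R$ on both families, and then invokes fibrewise simplicity---scalar endomorphisms when $\lambda \neq 0$, and at $\lambda = 0$ irreducibility of the spectral curve, which forces the limiting Higgs bundle to be simple---to conclude that the extended map is already an isomorphism over $R$ preserving $\nabla$, $V$ and $\beta$, so the two special fibres coincide. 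The delicate part is precisely this last ``no jumping'' step in the presence of residual eigenvalues that vary with $\lambda$: one must check that coincidences among the eigenvalues of the residue cannot create a non-isomorphic limit, and it is exactly here that Hypothesis \ref{hypFQPLLC}---multiplicity one for the KMS spectrum together with irreducibility of the spectral curve over $\lambda =0$---is used, as in \cite{tgps}.
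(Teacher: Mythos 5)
Your outline follows essentially the same route the paper itself delegates to the rank~$2$ case of \cite{tgps} (the paper's proof is just that citation): a Quot-type parameter space on which the group acts freely because framed objects under Hypothesis \ref{hypFQPLLC} have no automorphisms, quasicompactness/local finite type from \cite{Herrero}, smoothness from the unobstructedness already recorded after Hypothesis \ref{hypFQPLLC}, and separatedness by the valuative criterion using simplicity and the irreducible spectral curve at $\lambda=0$. One correction: in your separatedness step you describe Hypothesis \ref{hypFQPLLC} as including ``multiplicity one for the KMS spectrum,'' but that condition is Hypothesis \ref{hypHB}, a hypothesis on the harmonic bundle that enters only later for preferred sections and is not part of the moduli problem; what you actually have available (and in substance use) is scalar endomorphisms together with irreducibility of the spectral curve over $\lambda=0$.
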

\begin{proof}
Analogous to the rank $2$ case in \cite[Theorem 6, Proposition 9, Corollary 18]{tgps}.
Numerous references were indicated there. 
\end{proof}

\subsection{Parabolic structures}

A parabolic structure is like a quasi-parabolic structure, except that we don't require the
graded pieces of the filtration to have dimension $1$, and we assign real levels in some interval
of length $1$ to 
the filtration pieces. A useful alternate definition goes as follows: a parabolic bundle consists of
a bundle $F^X$ over $X$, for which we denote $j_{\ast}(F^X)$ by $F(\ast D)$ on $\overline{X}$,
and an increasing 
collection of locally free subsheaves denoted $F_a\subset F(\ast D)$ indexed by 
$a=(a_1,\ldots , a_k)$ for $a_i\in \rr$, satisfying
$F_{a_1, \ldots , a_i + 1, \ldots , a_k} = F_{a_1, \ldots , a_i , \ldots , a_k} \otimes 
\Oo _{\overline{X}}(t_i)$, and
semicontinuous in the sense that 
$F_{\ldots , a_i + \epsilon , \ldots } = F_{\ldots , a_i, \ldots}$ for small $\epsilon$. 
We'll usually denote this kind of structure by $F_{\cdot}$. 

Define the graded pieces at each point $t_i\in D$ to be
$$
{\rm gr}_{t_i,a _i} (F_{\cdot}):= F_{\ldots , a_i, \ldots}/ F_{\ldots , a _i - \epsilon , \ldots }
$$
The jumps or parabolic levels at $t_i$ are the $a_i$ such that this is nonzero. 
For any $k\in \zz$, multiplication by $(z-t_i)^{-k}$ induces 
isomorphisms ${\rm gr}_{t_i,a _i} (F_{\cdot})\cong {\rm gr}_{t_i,a _i+k} (F_{\cdot})$.
One therefore only needs to look at the graded pieces for values of $a_i$ in
an interval of length $1$, usually chosen as $(-1,0]$. 

A logarithmic $\lambda$-connection on $F_{\cdot}$ is a $\lambda$-connection 
operator $\nabla ^{\lambda}$ on $F^X$ that induces a logarithmic $\lambda$-connection
on each subsheaf $F_a$. At each point $t_i\in D$ and for each parabolic level $a_i$
we get an endomorphism 
$$
{\rm res}_{t_i,a_i}(\nabla^{\lambda}) : {\rm gr}_{t_i,a _i} (F_{\cdot})
\rightarrow {\rm gr}_{t_i,a _i} (F_{\cdot}).
$$
Notice that the isomorphism of multiplication by $(z-t_i)^{-k}$  induces
$$
{\rm res}_{t_i,a_i+k}(\nabla^{\lambda}) = {\rm res}_{t_i,a_i}(\nabla^{\lambda}) - k\lambda .
$$

If we fix $b=(b_1,\ldots , b_k)$ then the vector bundle $F_b$ has a 
logarithmic $\lambda$-connection. We would like to give this bundle a 
compatible quasi-parabolic
structure. 

For $t_i\in D$, the fiber 
denoted $(F_b)_{t_i} = F_{t_i,b_i} = F_{\ldots, b_i,\ldots } / F_{\ldots, b_i-1,\ldots }$ has 
a filtration with associated-graded
$$
\bigoplus _{a\in (b_i-1,b_i]} {\rm gr}_{t_i,a}(F_{\cdot}).
$$
The pieces have operators ${\rm res}_{t_i,a}(\nabla )$, whence the decomposition 
into generalized eigenspaces
$$
{\rm gr}_{t_i,a}(F_{\cdot}) \cong \bigoplus _{\alpha \in \cc} {\rm gr}_{t_i,a,\alpha}(F).
$$

\begin{proposition}
\label{pointwise}
Suppose that the nonzero generalized eigenspaces ${\rm gr}_{t_i,a,\alpha}(F)$
have dimension $1$. 
Then for each $t_i\in D$ 
there exists an ordering of the set of $r$ pairs $(a,\alpha )$ where ${\rm gr}_{t_i,a,\alpha}(F)\neq 0$,
written in order 
as $(a^1_i,\alpha ^1_i), \ldots , (a^r_i, \alpha ^r_i)$, satisfying the following condition:
\newline
(O) \, if $\alpha ^j_i = \alpha ^k_i$ then $a^j_i < a^k_i$.
\newline
Use the ordering to define a complete filtration of the fiber $F_{t_i,b_i}$. 
Making these
choices at each singular point yields a framed object of $QPL(X,\lambda ,r)$,
which if it satisfies Hypothesis \ref{hypFQPLLC} gives a point of $\widetilde{M}_{Hod}(X)_{\lambda}$. 
\end{proposition}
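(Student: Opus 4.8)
The plan is to argue one singular point at a time, reducing the statement to elementary linear algebra on the fiber; the only genuinely structural input is the standard local description of a parabolic logarithmic $\lambda$-connection. Fix $t = t_i \in D$ and write $W := F_{t,b_i}$, a vector space of dimension $r$ carrying the increasing parabolic filtration $W_{\le a}$ (the image of $F_{\ldots, a, \ldots}$ for $a \in (b_i-1,b_i]$), whose graded piece at level $a$ is ${\rm gr}_{t,a}(F_\cdot)$, together with the residue $N := {\rm res}_t(\nabla)$ on $W$ of the induced logarithmic $\lambda$-connection on $F_b$. First I would record the following: because $\nabla$ induces a logarithmic $\lambda$-connection on every subsheaf $F_a$, the endomorphism $N$ preserves each step $W_{\le a}$, and on ${\rm gr}_{t,a}(F_\cdot)$ — for $a$ in the chosen period, so that no $\lambda$-shift intervenes — it induces ${\rm res}_{t,a}(\nabla)$.

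Next I would pass to the generalized eigenspace decomposition $W = \bigoplus_{\alpha \in \cc} W_\alpha$ of $N$. Since the projectors onto the $W_\alpha$ are polynomials in $N$, every $N$-stable subspace is the direct sum of its intersections with the $W_\alpha$; applying this to the steps $W_{\le a}$ gives $W_{\le a} = \bigoplus_\alpha (W_{\le a} \cap W_\alpha)$, and a short diagram chase identifies the graded piece at level $a$ of the induced filtration on $W_\alpha$ with the generalized eigenspace ${\rm gr}_{t,a,\alpha}(F)$. By the dimension hypothesis every nonzero such piece is a line, so the induced filtration on each $W_\alpha$ is a complete flag, of length $\dim W_\alpha$, whose steps — being of the form $W_{\le a} \cap W_\alpha$ — are each $N$-stable; in particular the number of pairs $(a,\alpha)$ with ${\rm gr}_{t,a,\alpha}(F) \neq 0$ equals $\sum_\alpha \dim W_\alpha = r$.

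To produce the ordering I would choose any linear order on the finite set of eigenvalues $\alpha$ occurring at $t$ and list the $r$ pairs lexicographically, first by $\alpha$ and then, within each fixed $\alpha$, by increasing $a$; this visibly satisfies (O), and conversely the construction that follows works for every ordering $(a^1_i,\alpha^1_i),\ldots,(a^r_i,\alpha^r_i)$ satisfying (O). Given such an ordering, let $m_\alpha(j)$ be the number of $p \le j$ with $\alpha^p_i = \alpha$; condition (O) forces the pairs with second coordinate $\alpha$ among the first $j$ to be precisely the $m_\alpha(j)$ lowest jumps of $W_\alpha$, so, writing $a^{(\alpha)}_j$ for the $m_\alpha(j)$-th jump of $W_\alpha$ (and reading the corresponding term as $0$ when $m_\alpha(j) = 0$), the subspaces
$$
V_{t,j} \ :=\ \bigoplus_{\alpha} \bigl( W_{\le a^{(\alpha)}_j} \cap W_\alpha \bigr)
$$
satisfy $V_{t,0} = 0$, $V_{t,r} = W$, $\dim V_{t,j} = \sum_\alpha m_\alpha(j) = j$, form an increasing chain, and are each $N$-stable because every summand is. Thus $V_{t,\bullet}$ is exactly a complete ${\rm res}_t(\nabla)$-stable filtration of the kind demanded by the definition of $QPL(X,\lambda,r)$ at $t$.

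Finally I would carry this out at every $t_i \in D$, transport the framing $\beta$ at $x \in X$ from the given parabolic object (here $(F_b)_x = F^X_x$ since $x \notin D$), and conclude that $(\lambda, F_b, \nabla, V)$ together with $\beta$ is a framed object of $QPL(X,\lambda,r)$; if it satisfies Hypothesis \ref{hypFQPLLC} then Theorem \ref{modulispace} returns a point of $\widetilde{M}_{\rm Hod}(X)_\lambda$. The main obstacle is really the only non-formal step, namely the compatibility recorded at the outset: that the residue of $\nabla$ on $F_b$, passed to the parabolic associated graded, reproduces the operators ${\rm res}_{t,a}(\nabla)$ — equivalently, that the generalized eigenspaces ${\rm gr}_{t,a,\alpha}(F)$ and their dimensions are correctly computed there — so that the dimension hypothesis really does yield complete $N$-stable flags on the $W_\alpha$. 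Everything after that, including the existence of an ordering satisfying (O) and the $N$-stability of the assembled flag, is routine linear algebra; one should note explicitly that the construction is non-canonical, depending on the ordering chosen, which is consistent with the proposition only asserting existence.
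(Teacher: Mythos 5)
Your argument is correct: the identification of the graded pieces of the fiber filtration with the ${\rm gr}_{t,a,\alpha}(F)$ via the generalized eigenspace decomposition of ${\rm res}_t(\nabla)$, and the observation that condition (O) is exactly what is needed so that the interleaved flag $V_{t,j}=\bigoplus_\alpha (W_{\le a^{(\alpha)}_j}\cap W_\alpha)$ is increasing, complete and residue-stable, is precisely the intended reasoning. The paper states Proposition \ref{pointwise} without proof, treating it as routine linear algebra, so your write-up simply supplies in detail the argument the paper leaves implicit (including the one non-formal point, the compatibility of the residue on $F_{t_i,b_i}$ with the operators ${\rm res}_{t_i,a}(\nabla)$ on the associated graded), and no gap remains.
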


The orderings may not be unique, 
and different choices give different points in the moduli space. 
This is the motivation for defining the Hecke gauge groupoid $\srG _{Hod}$
in Section \ref{HGH}
that will relate the points obtained by different such choices.

\subsection{Harmonic bundles}

Suppose given a solution of the Hitchin equations over $X$, expressed
as a metrized ${\mathcal C}^{\infty}$ bundle with a collection of
operators called a harmonic bundle 
$(E,\partial , \overline{\partial}, \varphi , \overline{\varphi},h )$ of rank $r$.
We'll assume throughout that it is {\em tame}, i.e. the eigenvalues of $\varphi$ 
have poles of order $\leq 1$ at the singularities. 

For any $\lambda \in \aaaa^1$ we get a holomorphic structure
$F^{\lambda , X} = (E,\overline{\partial} + \lambda \overline{\varphi})$ 
over the open set $X$. 
The metric induces a notion of meromorphic
growth of sections, yielding the sheaf $F^{\lambda}(\ast D)$,
quasicoherent over $\overline{X}$, consisting of holomorphic sections of 
$F^{\lambda , X}$ whose norms grow at most meromorphically near $D$. 

By \cite{hbnc} for $\lambda = 0,1$ and by 
\cite{Mochizuki} in general, we obtain a parabolic bundle $F^{\lambda}_{\cdot}$ on 
$\overline{X}$. The sheaf of sections at level $a=(a_1,\ldots , a_k)$ 
is the subsheaf $F^{\lambda}_{a _1,\ldots , a _k}\subset F^{\lambda}(\ast D)$
of holomorphic sections of $ (E,\overline{\partial} + \lambda \overline{\varphi})$ 
whose growth, as measured by $h$, is
less than $|z-t_i| ^{-a _i -\epsilon}$ near $t_i$ for any $\epsilon > 0$. 
This is a parabolic bundle. 

The operator $\nabla ^{\lambda} = \lambda \partial + \varphi $
defines a logarithmic $\lambda$-connection 
$$
\nabla ^{\lambda} : F^{\lambda}_{a_1,\ldots , a_k } 
\rightarrow 
F^{\lambda}_{a_1,\ldots , a_k } \otimes \Omega ^1_{\overline{X}}(\log D).
$$
If $\lambda \neq 0$, the rescaled connection operator $\lambda ^{-1} \nabla ^{\lambda}$ gives a flat
${\mathcal C} ^{\infty}$ connection 
$$
D^{\lambda} = \partial + \overline{\partial} + \lambda ^{-1} \varphi + \lambda \overline{\varphi}.
$$
These rescaled connections
appear in the works on conformal limits
\cite{CollierWentworth,MulaseEtAl,DumasNeitzke,Schulz}. 

\begin{remark}
\label{conj}
On the complex conjugate curve $X^c$(denoted that way since we already
use the notation $\overline{X}$ for the compactification) there is a conjugate harmonic
bundle, and the sheaf of flat sections of 
the flat connection corresponding to $\lambda ^{-1}$ on $X^c$, is naturally isomorphic
to the sheaf of flat sections of $D^{\lambda}$ on $X$ via the 
complex conjugation homeomorphism $X^{\rm top}\cong (X^c)^{\rm top}$. 
The local filtrations at nearby points
to $t_i$ by order of growth of flat sections are the same under this isomorphism. 
\end{remark}

\subsection{KMS spectrum}

Following \cite{Mochizuki}, the {\em KMS space} is defined to be 
$$
{\bf KMS}:= (\rr / \zz) \times \cc  = (\rr \times \cc ) / \zz .
$$
We are measuring it at $\lambda = 0$.  Define
the {\em KMS spectrum} of the harmonic bundle 
at a point $t_i\in D$ to be the collection of $r$ pairs $(a_i\alpha _i) \in {\bf KMS}$ 
where $\alpha _i$ are the eigenvalues of the residue of the Higgs field $\varphi$ on 
${\rm gr}_{t_i,a _i}(F^0_{\cdot})$
for $-1 < a_i \leq 0$. The parabolic levels $ a _i$ are viewed as being in $\rr / \zz$. 

For $\lambda \in \aaaa^1$, Sabbah and Mochizuki \cite{Mochizuki} define a map
$({\mathfrak p}_{\lambda} , {\mathfrak e}_{\lambda}) :
\rr \times \cc \rightarrow \rr \times \cc$
by  
\begin{equation}
\label{frakep}
{\mathfrak p}_{\lambda}(a,\alpha ) := 
a + 2 {\rm Re}(\lambda \overline{\alpha})
\;\;\;\;\;\;
{\mathfrak e}_{\lambda}(a,\alpha ) := 
\alpha - a\lambda + \overline{\alpha} \lambda ^2.
\end{equation}
These map $\zz \cdot (1,0)$ to $\zz \cdot (1,-\lambda )$, 
and define an isomorphism 
$$
{\bf KMS} \stackrel{\cong}{\longrightarrow} 
{\bf KMS}_{\lambda} := (\rr \times \cc ) / \zz \cdot (1,-\lambda ).
$$

For $\lambda\in \aaaa^1$, a harmonic bundle yields a 
parabolic logarithmic $\lambda$-connection as explained above. 
Define the KMS spectrum elements $(a ^{\lambda}_i, 
\alpha ^{\lambda}_i)\in {\bf KMS}_{\lambda}$ where $\alpha _i ^{\lambda}$
are the eigenvalues of ${\rm res}_{t_i,a_i}(\nabla ^{\lambda})$.
These are viewed naturally as elements of the quotient
${\bf KMS}_{\lambda}$. 
The Higgs case corresponds
to $\lambda = 0$. 

There is a one-to-one correspondence \cite{Mochizuki} 
between the KMS spectrum elements at $\lambda = 0$
and those at other values of $\lambda$, via the 
isomorphism $({\mathfrak p}_{\lambda} , {\mathfrak e}_{\lambda}) $.

\begin{hypothesis}
\label{hypHB}
Let  $(E,\partial , \overline{\partial}, \varphi , \overline{\partial},h )$ be a tame
harmonic bundle of rank $r$. We assume that 
the spectral curve of the Higgs field is irreducible, and 
that at each $t\in D$, there are  $r$ distinct spectrum elements of ${\bf KMS}$. 
\end{hypothesis}

Given a harmonic bundle, for each $\lambda$ we get a parabolic logarithmic 
$\lambda$-connection. Even under Hypothesis \ref{hypHB}, there will be some
values of $\lambda$ for which the parabolic filtration isn't complete, and hence 
doesn't define a quasi-parabolic structure. When we cross over these points,
the order in the filtration changes. 

Nevertheless, as shall be seen in subsection \ref{smfilt} below, one
can define local sections of the moduli space. This involves using
an order different from the parabolic orderings of Proposition \ref{pointwise}. 
To get the local sections to glue
together in a globally defined section over $\aaaa^1$, we need to define
an equivalence relation on $\widetilde{M}_{\rm Hod}(X)$ so that the images 
of local sections modulo the equivalence
relation agree on overlapping open neighborhoods. 
The equivalence relation should be etale in order to get a smooth
quotient space. We'll see how to do that next.

\section{The Hecke gauge groupoids}
\label{HGH}

Consider first the notion of a collection of partially defined maps from 
a smooth complex manifold $Y$ to itself.
A {\em partial automorphism of $Y$} is a pair $(U,f)$ where $U\subset Y$ is a dense Zariski open 
subset and $f: U \rightarrow Y$ is an isomorphism onto its image $f(U)$ which is also
assumed to be a dense Zariski open subset.  Even though more general partially defined maps are
not necessarily composable (indeed, the image of
a first map could go into the closed subset where a second map is not defined),  the partial  
automorphisms according to this definition are composable,  and have partial inverses.  The 
partial inverse of $(U,f)$ is by definition $(f(U), f^{-1})$ and these compose to the partial identities
$(U,Id)$ or $(f(U),Id)$. 

A collection of partial automorphisms $\{ g_i \} _{i\in I}$ generates a monoid
$\widehat{G}$ 
of partial automorphisms, and hence defines a groupoid 
$(Y, \srG )$ in the
category of complex manifolds.  The object-object is $Y$ and the morphism-object is
a complex manifold $\srG$
with structural maps 
$$
Y\stackrel{i}{ \rightarrow} \srG \stackrel{s,t}{\longrightarrow} Y 
$$
with multiplication $\mu : \srG \times _Y \srG \rightarrow \srG$ and inverse $\nu : \srG \rightarrow 
\srG$.  This is constructed in the following way: let 
$$
\widehat{\srG} := \coprod _{(U,g)\in \widehat{G}} U
$$
that already has a structure of groupoid.  The points of $\widehat{\srG}$ are triples
of the form $(x,U,g)$ where $(U,g) \in \widehat{G}$ and $x\in U$. 
The source and target are
$$
s(x,U,g) := x, \;\;\; t(x,U,g) := g(x).
$$
Then,  define an equivalence relation $\sim$ on 
$\widehat{\srG}$ by saying that 
$$
(x,U,f) \sim (y,V, g)
$$
if and only if $x = y$ and $f(x) = g(y)$ and the map of germs $f: (Y,x) \rightarrow (Y,f(x))$
is equal to the map of germs $g: (Y,y) \rightarrow (Y,g(y))$.  Let $\srG$ be the quotient
of $\widehat{\srG}$ by $\sim$.  
Composition of partial automorphisms defines the composition on $\widehat{\srG}$
and this is compatible with $\sim$ so it descends to a composition on $\srG$. 

\subsection{The groupoid on the Betti moduli space}

Let's first look at how this applies to the Betti moduli space for filtered local systems,
essentially corresponding to one of the Fock-Goncharov spaces \cite{FockGoncharov}.
This is the parameter space of $(\rho , \{ V_{\cdot}^t\} _{t\in D} )$ where
$\rho : \pi _1(X,x)\rightarrow GL_r(\cc)$ is a representation 
and for each $t\in D$, $\{ V_{i}^t\subset \cc^r\} _{0\leq i \leq r}$ is a complete filtration 
invariant under $\rho (\gamma _t)$ for the paths $\gamma _t$ going around
singular points $t\in D$.  

\begin{hypothesis}
\label{hypFLS}
Make the condition that $(\rho , V_{\cdot})$ has only
scalar endomorphisms. 
\end{hypothesis}

Let $M_B$ be the moduli space of $(\rho , V_{\cdot})$ satisfying this condition. Apply
the general theory of the previous subsection to $Y=M_B$. 

For each $t\in D$ we obtain the ordered collection 
$(\alpha _1(t),\ldots , \alpha _r(t))$ of eigenvalues of $\rho (\gamma _t)$ acting on
the $1$-dimensional graded pieces $V_{i}^t/V_{i-1}^t$.
Putting these together for $t_1,\ldots , t_k$ we obtain
a map 
$$
A: M_B \rightarrow \aaaa ^{rk}.
$$

Suppose $\sigma = \{ \sigma (t)\} _{t\in D} \in S_r^k$ is a collection of permutations. 
We define the open subset $U_{\sigma} \subset  \aaaa ^{rk}$ to be the
subset of points 
$$
\alpha = \{ \alpha ^i(t)\} _{t\in D, i=1,\ldots ,r}
$$
such that if $i<j$ and  $\sigma (t)(i) > \sigma (t)(j)$ then $\alpha ^i(t) \neq \alpha ^j(t)$. 
Thus $\alpha \in U_{\sigma}$ if and only if $\sigma$ preserves  the ordering of indices that
have the same values $\alpha ^i(t)$.  

Let 
$$
p_{\sigma} : U_{\sigma} \rightarrow  \aaaa ^{rk}
$$
be the map obtained by applying $\sigma$ to the values.  The image $p_{\sigma}(U_{\sigma})$
is equal to $U_{\sigma ^{-1}}$ and $p_{\sigma ^{-1}}$ is inverse to $p_{\sigma}$
on these open subsets. 

For any $\sigma$ we obtain a map, defined by reordering the basis elements adapted
to the filtrations
$$
P_{\sigma} : A^{-1}(U_{\sigma}) \rightarrow A^{-1}(p_{\sigma}(U_{\sigma}))\subset M_B.
$$
This gives a partial automorphism $(U_{\sigma}, P_{\sigma})$ of $M_B$. 
The following are left to the reader. 

\begin{lemma}
Given multi-permutations $\sigma$ and $\tau$,  the composition 
$$
(U_{\tau} , P_{\tau}) \circ (U_{\sigma} , P_{\sigma}) = (U', P')
$$
is a subset of $(U_{\tau \sigma}, P_{\tau \sigma})$ in that $U'\subset U_{\tau \sigma}$ and
$P' = P_{\tau \sigma}|_{U'}$.
\end{lemma}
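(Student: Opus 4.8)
The plan is to reduce the statement to two elementary facts about the finite data living in $\aaaa^{rk}$: the compatibility of $A$ with the reordering maps, and a purely combinatorial multiplicativity of the sets $U_\sigma$. Nothing analytic is involved; once the index conventions are fixed, everything is bookkeeping on $\aaaa^{rk}$, carried pointwise over $t\in D$.

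First I would record the compatibility $A\circ P_\sigma = p_\sigma\circ A$ on $A^{-1}(U_\sigma)$, which is immediate from the description of $P_\sigma$ as reordering the filtration-adapted bases by $\sigma$: the residual-eigenvalue vector of the reordered object is exactly $p_\sigma$ applied to the original one. This identity shows that the domain of $(U_\tau,P_\tau)\circ(U_\sigma,P_\sigma)$ is $A^{-1}(U')$ for $U':=U_\sigma\cap p_\sigma^{-1}(U_\tau)$, and that on this domain the composite map is $P_\tau\circ P_\sigma$. It also reduces the claimed identity $P'=P_{\tau\sigma}|_{U'}$ to the formula $P_\tau\circ P_\sigma=P_{\tau\sigma}$ (valid wherever both sides are defined), which in turn follows from $p_\tau\circ p_\sigma=p_{\tau\sigma}$ by unwinding the conventions.

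Next I would prove the combinatorial inclusion at each $t\in D$: if $\alpha\in U_{\sigma(t)}$ and $p_{\sigma(t)}(\alpha)\in U_{\tau(t)}$, then $\alpha\in U_{(\tau\sigma)(t)}$. Dropping $t$ from the notation, fix $i<j$ with $(\tau\sigma)(i)>(\tau\sigma)(j)$ and split into cases. If $\sigma(i)>\sigma(j)$, then $\alpha\in U_\sigma$ already forces $\alpha^i\neq\alpha^j$. If $\sigma(i)<\sigma(j)$, then the pair of indices $\sigma(i)<\sigma(j)$ is inverted by $\tau$, so $p_\sigma(\alpha)\in U_\tau$ gives $\alpha^i=p_\sigma(\alpha)^{\sigma(i)}\neq p_\sigma(\alpha)^{\sigma(j)}=\alpha^j$; the case $\sigma(i)=\sigma(j)$ cannot occur. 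Taking the product over $t\in D$ yields $U'\subset U_{\tau\sigma}$, and combined with the first paragraph this gives $P'=P_{\tau\sigma}|_{U'}$.

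The only point requiring genuine care — and the place where a sign error would creep in — is pinning down the three conventions (how $p_\sigma$ permutes the entries of $\alpha$, which direction $P_\sigma$ permutes the adapted basis, and the order of composition) so that one really gets $\tau\sigma$ and not $\sigma\tau$ in both $p_\tau\circ p_\sigma=p_{\tau\sigma}$ and $P_\tau\circ P_\sigma=P_{\tau\sigma}$. These are forced to be consistent by the already-stated facts that $p_\sigma(U_\sigma)=U_{\sigma^{-1}}$ and that $p_{\sigma^{-1}}$ inverts $p_\sigma$, so there is no real obstacle. The inclusion $U'\subset U_{\tau\sigma}$ is in general strict: the composite is only defined where $\sigma$ carries $\alpha$ into $U_\tau$, and this can be a proper subset of $U_{\tau\sigma}$ (for instance when $\tau=\sigma^{-1}$, where $U_{\tau\sigma}$ is everything but $U'$ still requires $\alpha$ to lie in $U_\sigma$).
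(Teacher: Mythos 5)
Your argument is correct and is essentially the intended one: the paper states this lemma with the proof left to the reader, and your pointwise case analysis on inversions of $\tau\sigma$ (using $\alpha\in U_{\sigma}$ when $\sigma$ already inverts the pair $i<j$, and $p_{\sigma}(\alpha)\in U_{\tau}$ when the inversion comes from $\tau$ acting on $\sigma(i)<\sigma(j)$), combined with the intertwining $A\circ P_{\sigma}=p_{\sigma}\circ A$, is exactly the straightforward verification that is expected. The only delicate point is pinning the convention $\bigl(p_{\sigma}\alpha\bigr)^{\sigma(t)(i)}=\alpha^{i}(t)$, which as you note is forced by the stated identities $p_{\sigma}(U_{\sigma})=U_{\sigma^{-1}}$ and $p_{\sigma^{-1}}=p_{\sigma}^{-1}$, and you handle it correctly.
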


\begin{corollary}
The groupoid $\srG_B$ on $M_B$ 
defined by this collection of partial automorphisms is equal to the 
union of the graphs of the $(U_{\sigma}, P_{\sigma})$. 
\end{corollary}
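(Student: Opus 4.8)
The plan is to read $\srG_B$ off directly from its presentation as the quotient of $\widehat{\srG}=\coprod_{(U,g)\in\widehat G}U$ by the germ-equivalence $\sim$, using the composition lemma to pin down the monoid $\widehat G$ generated by the $(U_\sigma,P_\sigma)$. First I would observe that this generating family is already closed under partial inverses: reordering by $\sigma^{-1}$ undoes reordering by $\sigma$, and the image $p_\sigma(U_\sigma)$ equals $U_{\sigma^{-1}}$, so the partial inverse of $(U_\sigma,P_\sigma)$ is $(U_{\sigma^{-1}},P_{\sigma^{-1}})$, again a member of the family. Feeding this into the Lemma and inducting on word length then shows that every element of $\widehat G$ is a restriction $(U',P_\sigma|_{U'})$ of a single generator, with $\sigma$ the product of the permutations appearing in the word and $U'\subset A^{-1}(U_\sigma)$ a dense Zariski open subset. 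So $\widehat G$ is precisely the set of restrictions of the $(U_\sigma,P_\sigma)$.

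Next I would pass to the quotient. A restriction $(U',P_\sigma|_{U'})$ has, at each $x\in U'$, the same germ as $(A^{-1}(U_\sigma),P_\sigma)$, so the $\sim$-class in $\srG_B$ of any point $(x,U,g)$ of $\widehat{\srG}$ equals the class of $(x,A^{-1}(U_\sigma),P_\sigma)$ for a suitable $\sigma$. Hence the structural map $(s,t)\colon\srG_B\to M_B\times M_B$ sends every point into $\bigcup_\sigma\mathrm{graph}(P_\sigma)$, and it surjects onto this union because each $(U_\sigma,P_\sigma)$ itself lies in $\widehat G$. Moreover, on the chart $A^{-1}(U_\sigma)\to\widehat{\srG}\to\srG_B$ the composite with $(s,t)$ is $x\mapsto(x,P_\sigma(x))$, which is an isomorphism onto the locally closed submanifold $\mathrm{graph}(P_\sigma)\subset M_B\times M_B$; so $(s,t)$ is a local isomorphism, and once injectivity is known it exhibits $\srG_B$ as the (in general non-separated) analytic space glued from the graphs $\mathrm{graph}(P_\sigma)$, which is the assertion of the corollary.

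For injectivity, suppose $(x,A^{-1}(U_\sigma),P_\sigma)$ and $(x,A^{-1}(U_\tau),P_\tau)$ have the same image under $(s,t)$, i.e. $P_\sigma(x)=P_\tau(x)$. Then the composite $(U_{\sigma^{-1}},P_{\sigma^{-1}})\circ(U_\tau,P_\tau)$ fixes $x$, and by the previous step it is a restriction of $(U_{\sigma^{-1}\tau},P_{\sigma^{-1}\tau})$ with $P_{\sigma^{-1}\tau}(x)=x$. Now membership of $x$ in $A^{-1}(U_{\sigma^{-1}\tau})$ forces $\sigma(t)^{-1}\tau(t)$ to preserve the relative order of any two indices sharing the same residual eigenvalue of $x$ at $t$, while $P_{\sigma^{-1}\tau}(x)=x$ forces that same permutation to carry each eigenvalue block of $x$ at $t$ to itself; a permutation that stabilizes every block and is order-preserving on each block is the identity, so $\sigma(t)=\tau(t)$ for all $t$ and the two points of $\srG_B$ coincide. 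The one step I expect to require real care is exactly this last combinatorial coherence — that being "valid at $x$" together with "inducing the trivial reordering at $x$" leaves no room for a nontrivial permutation; the remainder is formal bookkeeping with germs together with the composition lemma, which we may assume.
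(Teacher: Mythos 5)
Your argument is correct and follows the route the paper intends: the composition Lemma plus induction on word length shows that every element of $\widehat{G}$ is a restriction of a single $(U_\sigma , P_\sigma )$, so every germ appearing in $\srG _B$ is a germ of some $P_\sigma$ and the groupoid is exactly the union of the graphs. The combinatorial step you isolate at the end---that $x\in A^{-1}(U_{\sigma ^{-1}\tau})$ together with $P_{\sigma ^{-1}\tau}(x)=x$ forces $\sigma =\tau$, since a permutation stabilizing each eigenvalue block and order-preserving on it is the identity---is sound, and it is precisely the injectivity content that the paper records separately in Corollary \ref{diagonal} and likewise leaves to the reader.
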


In particular, the diagonal is a connected component of $\srG_B$. 

\begin{corollary}
\label{diagonal}
The map $\srG_B\rightarrow M_B\times M_B$ is injective. Any connected open subset of $\srG_B$
that intersects the diagonal is contained in the diagonal. 
\end{corollary}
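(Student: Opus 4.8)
The plan is to deduce everything from the previous Corollary, which identifies $\srG_B$ as the union of the graphs $\Gamma_\sigma := \{(x, P_\sigma(x)) : x \in A^{-1}(U_\sigma)\}$ over the finite set of multi-permutations $\sigma \in S_r^k$, together with the defining property that $U_\sigma$ is exactly the locus where $\sigma$ preserves the ordering among indices with equal eigenvalues. First I would prove injectivity of $\srG_B \to M_B \times M_B$. A point of $\srG_B$ lies on some $\Gamma_\sigma$, so it suffices to show that if $(x, P_\sigma(x)) = (x, P_\tau(x))$ for two multi-permutations $\sigma, \tau$ and $x \in A^{-1}(U_\sigma) \cap A^{-1}(U_\tau)$, then the two points of $\srG_B$ they represent are equal, i.e. the germs of $P_\sigma$ and $P_\tau$ at $x$ agree. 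The point $x$ corresponds to a tuple $(\rho, V_\cdot)$ with eigenvalue data $\alpha = A(x)$; the maps $P_\sigma$ and $P_\tau$ reorder the filtration steps according to $\sigma$ and $\tau$. That $P_\sigma(x) = P_\tau(x)$ as points of $M_B$ means the reordered filtered local systems are isomorphic; since the object has only scalar endomorphisms (Hypothesis \ref{hypFLS}), an isomorphism is essentially unique, and it forces $\sigma(t)$ and $\tau(t)$ to differ only by permutations within blocks of equal $\alpha^i(t)$. But $\sigma, \tau \in U_\sigma \cap U_\tau$ means both preserve the ordering within such blocks, so $\sigma(t)$ and $\tau(t)$ induce the same reordering on the \emph{ordered} graded pieces, hence $P_\sigma$ and $P_\tau$ have the same germ at $x$. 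This gives injectivity.

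For the second assertion, suppose $W \subseteq \srG_B$ is a connected open subset meeting the diagonal $\Delta = \Gamma_{\rm id}$. Since $\srG_B = \bigcup_\sigma \Gamma_\sigma$ is a finite union of closed submanifolds (each $\Gamma_\sigma$ is the graph of an isomorphism defined on a Zariski-dense open set, hence closed in $s^{-1}(A^{-1}(U_\sigma))$), I would argue that a connected open piece cannot jump between different graphs except along their intersection locus. Concretely, take a point $w_0 \in W \cap \Delta$; then $w_0 \in \Gamma_{\rm id}$, and near $w_0$ the groupoid $\srG_B$ coincides with $\Delta$ \emph{unless} some other $\Gamma_\sigma$ passes through $w_0$. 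But $w_0 \in \Gamma_\sigma$ forces $P_\sigma$ to fix $x_0 := s(w_0)$, which by the scalar-endomorphism rigidity argument above means $\sigma(t)$ permutes only within equal-eigenvalue blocks; combined with $x_0 \in U_\sigma$ (preservation of block-order) this gives that $P_\sigma$ has the identity germ at $x_0$, so $\Gamma_\sigma$ and $\Delta$ have the same germ at $w_0$. Thus a neighborhood of $w_0$ in $\srG_B$ is contained in $\Delta$, i.e. $\Delta$ is open in $\srG_B$. Since $\Delta$ is also closed in $\srG_B$ (it is the graph of the identity, and injectivity shows it is a union of connected components), any connected $W$ meeting $\Delta$ must lie in $\Delta$.

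I expect the main obstacle to be the rigidity step: making precise the claim that $P_\sigma(x) = P_\tau(x)$ in $M_B$ forces $\sigma$ and $\tau$ to differ only by within-block permutations, and that this is an \emph{open} condition identifying the germs rather than merely a pointwise coincidence. This is where Hypothesis \ref{hypFLS} is essential — without the scalar-endomorphism assumption, two different reorderings could accidentally yield isomorphic filtered local systems at special points while giving genuinely different germs, and $\srG_B \to M_B \times M_B$ could fail to be injective. One must check that the unique (up to scalar) isomorphism intertwining the two reorderings is actually the tautological reindexing map, which it is because both $\sigma$ and $\tau$ respect the ordering convention (O)-type condition built into $U_\sigma$ and $U_\tau$; after that, the openness of ``lies on $\Delta$'' follows from the local constancy of $A$-fibers and the fact that the finitely many $U_\sigma$ containing $x_0$ all restrict $P_\sigma$ to the identity germ there.
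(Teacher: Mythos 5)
Your overall route is the one the paper intends (the corollary is meant to follow from the preceding lemma and corollary identifying $\srG_B$ with the union of the graphs $\Gamma_\sigma$ of the partial automorphisms $(U_\sigma,P_\sigma)$, plus a rigidity statement), and your skeleton — reduce to two multi-permutations $\sigma,\tau$ with $P_\sigma(x)=P_\tau(x)$, show the germs agree, then show the diagonal is open and closed — is correct. But the key rigidity step is misattributed. It has nothing to do with Hypothesis \ref{hypFLS} or uniqueness of isomorphisms: since by construction $A\circ P_\sigma=p_\sigma\circ A$, the equality $P_\sigma(x)=P_\tau(x)$ already gives $p_\sigma(A(x))=p_\tau(A(x))$, so for each $t$ and each level set $L=\{i:\alpha^i(t)=c\}$ of the eigenvalue vector one has $\sigma(t)(L)=\tau(t)(L)$; the defining condition of $U_\sigma$ and $U_\tau$ says $\sigma(t)|_L$ and $\tau(t)|_L$ are order-preserving bijections onto this common image, and such a bijection between finite ordered sets is unique, so $\sigma(t)=\tau(t)$ for every $t$, i.e.\ $\sigma=\tau$ exactly (not merely germ-equal). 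In particular your closing worry that without the scalar-endomorphism hypothesis ``$\srG_B\to M_B\times M_B$ could fail to be injective'' is unfounded: Hypothesis \ref{hypFLS} enters in setting up the moduli space, not in Corollary \ref{diagonal}. The appeal to an ``essentially unique isomorphism'' does not by itself yield the within-block statement; what yields it is that any identification of the two reordered objects preserves the ordered list of residual eigenvalues, which is exactly the combinatorial argument above.

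For the second assertion, the same rigidity with $\tau=\mathrm{id}$ shows that if $p_\sigma(\alpha)=\alpha$ with $\alpha\in U_\sigma$ then $\sigma=\mathrm{id}$; hence for $\sigma\neq\mathrm{id}$ the sheet $\Gamma_\sigma$ is disjoint from the diagonal $\Delta$, and since each sheet is open in $\srG_B$ (germ-equality is an open condition — this, rather than your remark about graphs being closed in $s^{-1}(A^{-1}(U_\sigma))$, is what makes ``a neighborhood of $w_0$ lies in $\Delta$'' work), the complement of $\Delta$ is open, i.e.\ $\Delta$ is closed; alternatively, $\Delta$ is the full preimage of the diagonal of the separated space $M_B\times M_B$. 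Your parenthetical ``injectivity shows it is a union of connected components'' is not a proof — injectivity alone gives neither openness nor closedness — but the needed disjointness statement is exactly the rigidity you already used, so the argument closes with that one correction.
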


\subsection{The groupoid on the Hodge moduli space}

The general construction applies also to define the {\em Hecke gauge groupoid} acting on 
the Hodge moduli space.  We need to 
specify the generating collection of partial automorphisms. 

Let $Y=\widetilde{M}_{\rm Hod}$
denote the moduli space of Theorem \ref{modulispace}.
A point of $Y$ corresponds to 
$(\lambda , F, \nabla, V, \beta )$ (usually denoted just $F$) 
as described in subsection \ref{qp-struct}.

We have maps $\lambda : Y\rightarrow \aaaa^1$ and for each $t\in D$ and $k\in 
\{ 1,\ldots , r\}$ maps 
$$
\theta _{t,k} : Y \rightarrow \aaaa^1
$$
where $\theta _{t,k}(F)$ is the eigenvalue of the residue of $\nabla$ at $t$, in the $k$-th piece of the associated-graded of the filtration.

The first automorphisms are $H_t$, whose domain of definition is $Y$, corresponding to a single Hecke rotation 
\cite{Maruyama,InabaIwasakiSaito,FassarellaLoray,HuHuangZong,Matsumoto}
at the point $t\in T$. The locally free sheaf $F'$ underlying $H_t(F)$ 
is the kernel of $F\rightarrow i_{t,\ast}(F_t/ V^t_{r-1})$ where $i_t:\{ t\} \rightarrow
\overline{X}$ is the inclusion. The filtration of $(F')_t$ is defined by
setting $V'_i$ to be the kernel of $(F')_t \rightarrow (F_t / V^t_{i-1})$. We have
$$
\theta _{t,k}(H_tF) = \theta _{t,k-1}(F), \;\; \mbox{ for } 2\leq k \leq r
$$
and 
$$
\theta _{t,1}(H_tF) = \theta _{t,r}(F) + \lambda .
$$

The other partially defined automorphism is denoted $T_{t,i}$ for $t\in D$ and $1\leq i < r$. 
This transposes
the $i$ and $i+1$ pieces of the residue. Its domain of definition is the subset of points 
of $Y$ where $\theta _{t,i}(F) \neq \theta _{t,i+1}(F)$. The underlying bundle
of $T_{t,i}(F)$ is again $F$, but the new filtration is obtained by taking a basis
adapted to the original filtration and composed of generalized eigenvectors of 
${\rm res}_t(\nabla )$, and transposing the $i$ and $i+1$ vectors. 
We have
$$
\theta _{t,k}(T_{t,i}F) = \theta _{t,k}(F), \;\; \mbox{ for } k \neq i,i+1
$$
and 
$$
\theta _{t,i}(T_{t,i}F) = \theta _{t,i+1}(F), \;\;\;\; 
\theta _{t,i+1}(T_{t,i}F) = \theta _{t,i}(F).
$$

We also have a simple tensoring operation $U_t$ for $t\in D$ that sends 
$F$ to $F\otimes \Oo _{\overline{X}}(t)$ with filtrations corresponding in an easy way.
For these, $\theta _{t,i}(U_tF) = \theta _{t,i}(F) - \lambda$.

We note that $T_{t,i}^2$ is the identity on the open subset of definition, and 
$U_tH_t^r = H_t^r U_t$ is the identity on $Y$. 

Let $\widehat{G}$ be the monoid of partial automorphisms 
generated by these operations. It has an action by partially
defined transformations on $Y$.  Let $\srG_{Hod}$ 
be the etale groupoid on the Hodge moduli space
$\widetilde{M}_{\rm Hod}(X)$
generated by the above operations.

\begin{proposition}
\label{injectivity}
Suppose $F\in Y= \widetilde{M}_{\rm Hod}$ over $\lambda \neq 0$, and suppose $A,B\in \widehat{G}$ are two words.  Suppose that
$A(F)$ and $B(F)$ are defined and equal. Then there is an open neighborhood 
$F\in U\subset Y$ such that $A$ and $B$ are defined and coincide on $U$.
\end{proposition}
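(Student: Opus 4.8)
The plan is to reduce the statement to the corresponding injectivity for the Betti moduli space, where Corollary~\ref{diagonal} already tells us the groupoid $\srG_B$ is essentially a union of graphs of the explicit partial automorphisms $P_\sigma$. First I would use the Riemann--Hilbert correspondence: for $\lambda \neq 0$, the rescaled connection $\lambda^{-1}\nabla^\lambda$ is a genuine flat connection, so an object $F\in Y$ lying over $\lambda$ determines a monodromy representation $\rho$ together with the filtrations $V^t_\cdot$ at the punctures, i.e.\ a point of $M_B$; and the condition of Hypothesis~\ref{hypFQPLLC} (only scalar endomorphisms) matches Hypothesis~\ref{hypFLS}. The subtle point is that this correspondence is not a morphism of algebraic varieties but only a local analytic isomorphism over each $\lambda\neq 0$ --- but that is exactly the setting we are in, since the statement is local around $F$. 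So I would fix $\lambda$, restrict $Y$ to the fiber over $\lambda$ (the claim on an open neighborhood in $Y$ follows from the claim on an open neighborhood in the fiber, because the fibration $\lambda: Y\to\aaaa^1$ is smooth and the words $A,B$ preserve $\lambda$), and identify an analytic neighborhood of $F$ in $Y_\lambda$ with an analytic neighborhood of the corresponding point in $M_B$.

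Next I would check that, under this identification, each generator of $\widehat{G}$ acts compatibly with a generator (or a controlled combination of generators) of the Betti-side partial automorphisms. The transposition $T_{t,i}$ manifestly goes to the simple transposition $\sigma(t)=(i,i+1)$ of the eigenvalue ordering at $t$, with the same domain of definition (where $\theta_{t,i}(F)\neq \theta_{t,i+1}(F)$ corresponds to $\alpha^i(t)\neq \alpha^{i+1}(t)$, using that the residue eigenvalues of $\lambda^{-1}\nabla^\lambda$ exponentiate to the monodromy eigenvalues). The Hecke rotation $H_t$ changes the underlying bundle but not the local system $\rho$ --- it only cyclically permutes the filtration steps at $t$ and multiplies a parabolic level, so it too descends to a cyclic permutation of the ordering, with everywhere-defined domain. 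The tensoring operation $U_t$ does not change $(\rho,V_\cdot)$ at all, hence maps to the identity partial automorphism on $M_B$. Thus any word $A\in\widehat{G}$ maps to a partial automorphism of $M_B$ which, locally near the image of $F$, agrees with one of the $P_\sigma$ for a well-defined multi-permutation $\sigma=\sigma(A)$ depending only on the combinatorial datum of $A$ (here I invoke the Lemma before Corollary~\ref{diagonal}).

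Now suppose $A(F)$ and $B(F)$ are defined and equal. Their images in $M_B$ are then defined and equal, so $P_{\sigma(A)}$ and $P_{\sigma(B)}$ agree at the point corresponding to $F$; since both are given by reordering eigenvalue data, this forces $\sigma(A)=\sigma(B)$ as multi-permutations (two distinct reorderings of a point whose eigenvalues at the relevant indices are distinct cannot coincide), and in particular $P_{\sigma(A)}$ and $P_{\sigma(B)}$ coincide on a whole Zariski-dense open neighborhood of that point in $M_B$, by Corollary~\ref{diagonal}. Pulling this neighborhood back through the local analytic identification $Y_\lambda\cong M_B$ and then through the smooth map $\lambda$, I obtain an open $F\in U\subset Y$ on which $A$ and $B$ are both defined and equal.

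The main obstacle I expect is bookkeeping rather than conceptual: one must verify carefully that the Betti-side transpositions, cyclic rotations and identities are exactly the images of $T_{t,i}$, $H_t$, $U_t$ \emph{including the domains of definition}, so that the inductive assignment $A\mapsto\sigma(A)$ is well defined and compatible with composition (matching the Lemma preceding Corollary~\ref{diagonal}); and one must be careful that the Riemann--Hilbert identification is only analytic and only over $\lambda\neq 0$, which is precisely why the hypothesis $\lambda\neq 0$ appears in the statement --- at $\lambda = 0$ the Higgs-bundle picture replaces the local system and the argument via $M_B$ breaks down. A secondary point to handle is that the open sets $U_\sigma\subset\aaaa^{rk}$ are defined using the eigenvalue map $A: M_B\to\aaaa^{rk}$; I would note that the composition of $A$ with the Riemann--Hilbert identification is, up to exponentiation, the map $(\theta_{t,k})$ on $Y_\lambda$, so the domains of definition transport correctly.
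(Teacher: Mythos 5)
Your overall route --- transfer the question to the Betti side and exploit Corollary~\ref{diagonal} --- is the same as the paper's, but two of your intermediate claims are genuinely wrong, and one of them hides the main difficulty. First, the dictionary of generators does not work as stated. No nontrivial $P_{\sigma}$ is everywhere defined: $U_{\sigma}=\aaaa^{rk}$ forces every $\sigma(t)$ to have no inversions, i.e.\ $\sigma=\mathrm{id}$; so $H_t$, whose domain is all of $Y$, cannot correspond to an ``everywhere-defined cyclic permutation'' on $M_B$. Its Betti-side effect depends on the point, through the sorting of the real numbers $b_{t,j}+{\rm Re}\,\theta_{t,j}$ and the auxiliary choices of the ball $B$ and the levels $b$ that enter the locally defined Riemann--Hilbert map of Proposition~\ref{rhc}, not only on the word. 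Likewise the domains of $T_{t,i}$ do not match: $\theta_{t,i}\neq\theta_{t,i+1}$ does \emph{not} imply distinct monodromy eigenvalues, since the residues may differ by a nonzero element of $\lambda\zz$ (the resonant case), and there the Betti transposition is not defined. Consequently the assignment $A\mapsto\sigma(A)$ ``depending only on the combinatorial datum of $A$'' is not well defined. What does survive is exactly the content of Proposition~\ref{rhc}: locally near $F$, the words $A$ and $B$ differ from the local Riemann--Hilbert map $rh$ by local sections of $\srG_B$, and then the injectivity statement of Corollary~\ref{diagonal} (source and target determine the germ) yields $rh\circ A= rh\circ B$ near $F$; this replaces your $\sigma(A)=\sigma(B)$ step.

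The more serious gap is the last step. To pass from $rh\circ A= rh\circ B$ to $A=B$ near $F$ you need $rh$ to be injective on a neighborhood of the common image $A(F)=B(F)$, and your assertion that Riemann--Hilbert ``is a local analytic isomorphism over each $\lambda\neq 0$'' is precisely the point that is neither established in the paper nor automatic: at resonant points, where residue eigenvalues differ by nonzero integer multiples of $\lambda$, local injectivity of the map to $M_B$ is exactly what is in doubt. The paper supplies the missing move: compose $A$ and $B$ with further words of $\widehat{G}$ so that the common image lies in the region $R$ where all ${\rm Re}\,\theta_{t_i,j}$ belong to $(-1+\epsilon,\epsilon)$ (Deligne's condition); on $R$ the map to $M_B$ is injective, one concludes there, and then cancels the auxiliary word using invertibility of partial automorphisms. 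Without this normalization, or an independent proof of local injectivity of $rh$, pulling back the Betti neighborhood does not give the desired $U$. A secondary point: agreement of $A$ and $B$ on an open subset of the fiber $Y_{\lambda}$ does not by itself give agreement on an open subset of $Y$; one should carry the $\lambda$-direction along via the identification of Corollary~\ref{rhcHod} over $\Gm$, as the paper implicitly does.
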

This will be proven in the next subsection using the Riemann-Hilbert correspondence. 

\begin{corollary}
The map
$$
(s,t):\srG _{Hod} \rightarrow 
\widetilde{M}_{\rm Hod}(X) \times _{\aaaa^1}
\widetilde{M}_{\rm Hod}(X)
$$
is injective over $\lambda \in \Gm \subset \aaaa^1$. 
\end{corollary}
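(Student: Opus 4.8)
The plan is to deduce the Corollary from Proposition~\ref{injectivity} by a routine reduction, so I would spend most of the argument unwinding the definition of $\srG_{Hod}$ as the quotient groupoid $\widehat{\srG}/\!\sim$ from Section~\ref{HGH} and checking that injectivity of $(s,t)$ on points is equivalent to the statement proven in the Proposition. First I would recall that a point of $\srG_{Hod}$ over $\lambda$ is represented by a triple $(F, U, g)$ with $g = [A]$ the germ of a word $A\in\widehat{G}$ defined at $F\in U$, and two such triples $(F,U,A)$ and $(F',U',B)$ are identified precisely when $F=F'$, $A(F)=B(F)$, and the germs of $A$ and $B$ at $F$ agree. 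The map $(s,t)$ sends this class to $(F, A(F))$. So to say $(s,t)$ is injective is exactly to say: whenever $F$ lies over $\lambda\in\Gm$ and $A(F)=B(F)$ for two words $A,B\in\widehat{G}$ defined at $F$, the germs of $A$ and $B$ at $F$ coincide.

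Next I would observe that this is literally the content of Proposition~\ref{injectivity}: given such $F$, $A$, $B$ with $A(F)=B(F)$, the Proposition produces an open neighborhood $U\ni F$ on which $A$ and $B$ are defined and agree, which is to say the germs of $A$ and $B$ at $F$ are equal. Hence the two representatives $(F,U,A)$ and $(F,U',B)$ define the same point of $\srG_{Hod}$, and $(s,t)$ is injective over $\Gm$. I would note for completeness that injectivity on $\cc$-points over $\Gm$ suffices here because $\srG_{Hod}$ is an analytic manifold and, by construction, the source map is a local isomorphism onto an open subset of $\widetilde{M}_{\rm Hod}(X)$; thus $(s,t)$ is an injective local immersion, which is what "injective" means at the level of the analytic groupoid.

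The one genuine subtlety I would flag is the passage from \emph{a} pair of words $A,B$ to \emph{all} pairs: a given element of $\srG_{Hod}$ over $F$ might a priori be represented by words $A$ and $B$ with $A(F)\ne B(F)$, and then there would be nothing to prove. This is handled by the very definition of the equivalence relation $\sim$ on $\widehat{\srG}$: if $(F,U,A)\sim(F,U',B)$ then by definition $A(F)=B(F)$ already, so Proposition~\ref{injectivity} applies. In other words, the restriction $\lambda\in\Gm$ enters only through the Proposition, and the reduction itself is formal. I expect no real obstacle in this Corollary; the substantive work is entirely in Proposition~\ref{injectivity}, whose proof via the Riemann--Hilbert correspondence is deferred to the next subsection, and the only thing to be careful about is stating the conclusion at the right level (germs/open neighborhoods versus set-theoretic points) so that it matches the manifold structure on $\srG_{Hod}$.
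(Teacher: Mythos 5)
Your proposal is correct and follows exactly the route the paper intends: the corollary is stated as an immediate consequence of Proposition~\ref{injectivity}, and your unwinding of the equivalence relation on $\widehat{\srG}$ (injectivity of $(s,t)$ over $\Gm$ amounts precisely to the statement that two words agreeing at a point $F$ with $\lambda\neq 0$ have equal germs at $F$) is the formal reduction the paper leaves to the reader. The only quibble is that your flagged ``subtlety'' about representatives with $A(F)\neq B(F)$ is vacuous, since the target is well defined on equivalence classes; but you resolve it correctly and it does not affect the argument.
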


\subsection{The Riemann-Hilbert correspondence}

The Riemann-Hilbert correspondence provides an identification between moduli
of connections and moduli of representations, and has been studied in the
quasi-projective case \cite{Deligne,EsnaultViehweg,AlfayaGomez,
Boalch,BudurLererWang,HaiEtAl,InabaIwasakiSaito,MisraSingh,NitsureSabbah,Sage,Singh}.

Let $\widetilde{M}_{dR}(X)$ be the fiber of 
$\widetilde{M}_{Hod}(X)$ over $\lambda = 1$. Rescaling the
$\lambda$-connection to a connection provides an isomorphism
$$
\widetilde{M}_{Hod}(X) \times _{\aaaa^1} \Gm \cong 
\widetilde{M}_{dR}(X)\times \Gm .
$$
We may therefore
consider, for the moment, the de Rham fiber.
Let $\srG _{dR}$ be the restriction of $\srG _{Hod}$ to $\widetilde{M}_{dR}(X)$.

We would like to define the Riemann-Hilbert map
$$
\widetilde{rh}:\widetilde{M}_{dR}(X)  \rightarrow M_B.
$$
It is going to be defined locally over open subsets, with the different local pieces
glueing together to provide a map that is well defined modulo the Betti groupoid
$\srG_B$. For this, we'll put back real parabolic levels into the picture. This procedure is
motivated by the construction used in subsection \ref{smfilt} below due to Sabbah and
Mochizuki. 

Recall that 
$\theta : \widetilde{M}_{dR}(X) \rightarrow \aaaa ^{kr}$ is the map with coordinate functions
$\theta _{t,j}$ defining the eigenvalues of the residue of the connection
on the graded pieces of the filtration. Suppose $B\subset  \aaaa ^{kr}$
is a small ball centered around some point. Choose real numbers $b_{t,j}\in (-1,0]$
with $b_{t,j-1} < b_{t,j}$ for $j=1,\ldots , r$ at each $t\in D$, and use these
to put a parabolic structure $F_{\cdot}$ on each quasi-parabolic connection $(F,\nabla , V_{\cdot})$. 
Precisely, 
$$
F_{a_1,\ldots , a_k} := \ker [
F\rightarrow \bigoplus _{a_i < b_{t_i,j}}i_{t_i,\ast} (F_{t_i} / V_{t_i,j}) ] .
$$
This parabolic structure defines a filtered local system $(L,V_{\cdot ,B})$ \cite{hbnc}. 
The real jumps of the filtration are $b_{t_i,j} + {\rm Re}\, \theta _{t_i,j}(F,\nabla , V_{\cdot})$. 

\begin{remark}
\label{choiceb}
If $B$ is a small enough ball around any point of $\aaaa^{kr}$, then there 
exists a choice of $b_{t,j}\in (-1,0]$ such that
for any $\eta \in B \subset  \aaaa ^{kr}$, and at any $t_i\in D$, the real numbers
$b_{t_i,j} + {\rm Re} \, \eta _{t_i,j}$ are distinct. 
\end{remark}

For $(F,\nabla , V_{\cdot}, \beta ) \in \theta ^{-1}(B)$, and for $b$ chosen as in the 
Remark \ref{choiceb}, the filtered local system associated to the parabolic
bundle defined from $F$ using $b$, has graded pieces of the filtrations of dimension $1$.
This yields a well-defined Betti quasi-parabolic structure and hence
a point in $M_B$. Our hypotheses \ref{hypFQPLLC} and \ref{hypFLS}
correspond under any map obtained from a choice of levels $b$. 

\begin{proposition}
\label{rhc}
We get in this way a holomorphic map $\theta ^{-1}(B)\rightarrow M_B$.
If we make a different choice of $b$ also valid for the open set $B$, then the
two maps differ by a section $\theta ^{-1}(B)\rightarrow \srG _B$ to the Betti gauge groupoid. 
This gives a well-defined map from
$\widetilde{M}_{dR}(X)$ to $(M_B , \srG _B)$
sending points differing by $\srG _{dR}$ to points differing by $\srG_B$,
and descending to an equivalence
$$
(\widetilde{M}_{dR}(X),\srG_{dR})  \stackrel{\cong}{\longrightarrow} (M_B , \srG _B).
$$
\end{proposition}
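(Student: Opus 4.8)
The plan is to build the map $\theta^{-1}(B)\to M_B$ pointwise using the construction already described, then check holomorphicity, then check the compatibility under change of $b$, and finally patch and verify the groupoid equivalence. First I would fix a ball $B\subset\aaaa^{kr}$ and a choice of levels $b$ as in Remark \ref{choiceb}, so that for every $\eta\in B$ the numbers $b_{t_i,j}+\mathrm{Re}\,\eta_{t_i,j}$ are distinct at each $t_i$. Given $(F,\nabla,V_{\cdot},\beta)\in\theta^{-1}(B)$, the formula for $F_{a_1,\ldots,a_k}$ defines a parabolic bundle with logarithmic $\lambda$-connection, and since $\lambda\neq 0$ we rescale to an honest connection; by \cite{hbnc} this corresponds to a filtered local system $(L,V_{\cdot,B})$, whose local filtrations are by order of growth of flat sections, with real jumps $b_{t_i,j}+\mathrm{Re}\,\theta_{t_i,j}$. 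The distinctness of these jumps forces the graded pieces to be one-dimensional, so we get a \emph{complete} Betti filtration and hence a point of $M_B$; Hypotheses \ref{hypFQPLLC} and \ref{hypFLS} correspond because "only scalar endomorphisms" is preserved by the Riemann--Hilbert correspondence and by the quasi-parabolic/parabolic passage (the endomorphism algebra does not see the levels). The framing $\beta$ carries over directly. This produces the set-theoretic map; holomorphicity follows because the Riemann--Hilbert correspondence in the logarithmic quasi-projective setting is holomorphic in families (as cited in the references on the de Rham--Betti comparison), and the parabolic-to-filtered-local-system construction is an algebraic/analytic operation on the moduli stack, so the composite depends holomorphically on $(F,\nabla,V_{\cdot},\beta)$ as it ranges over $\theta^{-1}(B)$.

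Next I would treat the change-of-$b$ statement. Suppose $b$ and $b'$ are both valid over $B$. Both give filtered local systems on the \emph{same} underlying local system $L$ (the rescaled flat connection does not depend on $b$), but possibly with different orderings of the local filtration steps: the orderings are governed by the relative sizes of $b_{t_i,j}+\mathrm{Re}\,\theta_{t_i,j}$ versus $b'_{t_i,j}+\mathrm{Re}\,\theta_{t_i,j}$. When two indices $i<j$ get swapped in passing from $b$ to $b'$, the corresponding eigenvalues $\alpha^i(t),\alpha^j(t)$ of $\rho(\gamma_t)$ must be distinct — otherwise the jump values would coincide (after the $\exp(-2\pi i(\cdot))$ identification the Betti eigenvalue is determined by $\alpha+\,$integer shift of the level, and equal eigenvalues with the chosen levels in $(-1,0]$ would force equal jumps, contradicting the distinctness hypothesis of Remark \ref{choiceb} for one of the two choices). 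Hence the permutation relating the two orderings lies in $U_\sigma$ at the relevant point, and the two maps $\theta^{-1}(B)\to M_B$ differ by applying $(U_\sigma,P_\sigma)$, i.e. by a section $\theta^{-1}(B)\to\srG_B$. This is the step where I expect the main subtlety: one must be careful that the "order of growth of flat sections" filtration produced by \cite{hbnc} is exactly the reordering described by $P_\sigma$, and that the permutations patch consistently over all of $B$ (not just pointwise) — but since $U_\sigma$ is Zariski open and the maps are holomorphic, agreement on a dense subset forces agreement of germs, which is precisely the equivalence relation $\sim$ defining $\srG_B$.

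Then I would globalize: cover $\widetilde{M}_{dR}(X)$ by opens $\theta^{-1}(B_\alpha)$ and use the change-of-$b$ result on overlaps $\theta^{-1}(B_\alpha\cap B_\beta)$ — since a single $b$ can be chosen valid on $B_\alpha\cap B_\beta$ and compared to the choices on each side, the local maps agree modulo $\srG_B$, giving a well-defined map $\widetilde{M}_{dR}(X)\to (M_B,\srG_B)$ in the sense of a map to the quotient groupoid. The same comparison shows this map sends points differing by a generator of $\srG_{dR}$ to points differing by $\srG_B$: the Hecke rotation $H_t$ corresponds to a shift of one of the levels $b_{t,j}$ by $\pm 1$ (so to a $\mathbb{Z}$-translation on $\mathbf{KMS}$, which is invisible on the Betti side after the exponential), the transposition $T_{t,i}$ corresponds exactly to a change of ordering, i.e. to $P_\sigma$ with $\sigma$ a transposition, and $U_t$ corresponds to the twist $F\mapsto F\otimes\Oo_{\overline X}(t)$ which again does not change the filtered local system. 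Conversely, the generators $(U_\sigma,P_\sigma)$ of $\srG_B$ are all realized, since every transposition of distinct-eigenvalue filtration steps is hit by some $T_{t,i}$ after suitable composition with Hecke rotations (these generate all of $S_r$), so the induced map on groupoids is essentially surjective. Finally, to get the equivalence $(\widetilde{M}_{dR}(X),\srG_{dR})\xrightarrow{\cong}(M_B,\srG_B)$ I would invoke Proposition \ref{injectivity} (injectivity of $\srG_{Hod}$, hence $\srG_{dR}$, over $\lambda\neq 0$) together with Corollary \ref{diagonal} (injectivity of $\srG_B\to M_B\times M_B$): the Riemann--Hilbert correspondence is a bijection on objects, it is compatible with the framings and with the eigenvalue maps $\theta$ vs. $A$ (under $\alpha\leftrightarrow\exp(-2\pi i\,\theta)$ or the appropriate normalization), and the two groupoids have the same orbits, so the functor is fully faithful and essentially surjective — an equivalence of analytic groupoids. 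The one point requiring genuine care throughout is matching conventions: that the Sabbah--Mochizuki prescription used to define the local sections in subsection \ref{smfilt} produces precisely the same orderings as the growth filtration of \cite{hbnc}, so that the $b$-dependence on the de Rham side matches the $\sigma$-dependence on the Betti side without an extra twist.
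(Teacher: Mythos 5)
Your construction of the local maps and the overall skeleton (choose $b$ as in Remark \ref{choiceb}, compare different choices through $\srG_B$, check the generators, deduce an equivalence) is the same as the paper's, but two steps are not actually established. First, in the change-of-$b$ step your stated reason is wrong: if the eigenvalues of $\rho(\gamma_t)$ coincide then $\theta_{t_i,j}-\theta_{t_i,j'}\in\zz$, and since $0<|b_{t_i,j}-b_{t_i,j'}|<1$ the two jump values then differ by a \emph{non}-integer; so equal eigenvalues do not "force equal jumps" and there is no contradiction with Remark \ref{choiceb}. The correct mechanism (the one the paper uses) is the unit-interval estimate run the other way: at a wall where two jumps coincide, $b_{t_i,j}+\mathrm{Re}\,\theta_{t_i,j}=b_{t_i,j'}+\mathrm{Re}\,\theta_{t_i,j'}$ with $0<|b_{t_i,j}-b_{t_i,j'}|<1$ forces $\theta_{t_i,j}\neq\theta_{t_i,j'}$ with real parts differing by less than one, hence distinct monodromy eigenvalues, so the interchange is an allowed element of $\srG_B$; equivalently, in your direct comparison, equal eigenvalues put the level differences for $b$ and $b'$ in the same open unit interval between consecutive integers, so the relative order of the two jumps cannot change. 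As written, the justification of this central step fails. (Minor, but in the same vein: $T_{t,i}$ need not map to a transposition $P_\sigma$ --- when $\theta_{t,i}-\theta_{t,i+1}$ is a nonzero integer the monodromy eigenvalues coincide and the Betti point is unchanged --- and $H_t$ is not "invisible": it can reorder the Betti flag; in all cases the image still lies in $\srG_B$, which is what matters.)

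Second, the final equivalence is not proved. Invoking Proposition \ref{injectivity} is circular: in the paper that proposition is proved \emph{using} the present one (via Corollary \ref{diagonal} and the injectivity of the map to $M_B$ on a suitable region). Moreover "the Riemann--Hilbert correspondence is a bijection on objects" is false here: infinitely many de Rham points, differing by Hecke rotations which shift residue eigenvalues by integers, have the same image in $M_B$; what must be shown is a bijection on groupoid orbits. Realizing the generators of $\srG_B$ in the image does not give this: one needs that two de Rham points whose images differ by $\srG_B$ already differ by $\srG_{dR}$. The missing ingredient is the paper's normalization argument: use $\srG_{dR}$ to bring the real parts of all $\theta_{t_i,j}$ into an interval $(-1+\epsilon,\epsilon)$ (Deligne's condition); on that region the map to $M_B$ is injective and the relations given by $\srG_B$ lift to $\srG_{dR}$, which is what yields the equivalence. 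Without this step, or a substitute for it, the last assertion of the proposition remains unproved.
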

\begin{proof}
Consider a point and a choice of $b$ for which some multiplicities of the graded pieces of the
local system are $>1$. There are distinct $j\neq j'$ such that
$$
b_{t_i,j} + {\rm Re}\, \theta _{t_i,j}(F,\nabla , V_{\cdot})
=
b_{t_i,j'} + {\rm Re}\, \theta _{t_i,j'}(F,\nabla , V_{\cdot}).
$$
Now  $0 < | b_{t_i,j} - b_{t_i,j'} | < 1$ so 
$ \theta _{t_i,j}(F,\nabla , V_{\cdot})$
and $\theta _{t_i,j'}(F,\nabla , V_{\cdot})$ 
are different 
and their real parts differ by $<1$. It follows that their
exponentials are different, so the two filtration levels can be interchanged 
via $\srG _B$, corresponding to $b$ crossing the corresponding wall. This shows that
the map to $ (M_B , \srG _B)$ is well-defined. We can calculate that the generators of 
$\srG _{dR}$ go to elements of $\srG_B$. To show the last statement that we get an
equivalence of groupoids, we need to show that two points whose images in 
$M_B$ differ by $\srG_B$, were in fact different by $\srG_{dR}$. For this, using elements
of $\srG_{dR}$ we can modify a quasi-parabolic $\lambda$-connection into one satisfying Deligne's
condition \cite{Deligne} that the real parts of the $ \theta _{t_i,j}(F,\nabla , V_{\cdot})$
are all contained in $(-1,0]$. There is $\epsilon$
so that they  are in the open interval 
$(-1+\epsilon , \epsilon)$. On those points, the relations given by $\srG_B$
lift to $\srG_{dR}$. 
\end{proof}

\begin{corollary}
\label{rhcHod}
Let
$\widetilde{M}_{Hod}(X)_{\Gm}:= \widetilde{M}_{Hod}(X) \times _{\aaaa^1} \Gm
\cong \widetilde{M}_{dR}\times \Gm$, and let $\srG_{Hod,\Gm}$ be the
restriction of $\srG_{Hod}$ over $\Gm$. We get
a {\em Riemann-Hilbert} equivalence of groupoids
$$
\widetilde{M}_{Hod,\Gm}(X),\srG_{Hod,\Gm}  \stackrel{\cong}{\longrightarrow}
(M_B , \srG _B)\times \Gm .
$$
\end{corollary}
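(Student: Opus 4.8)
The plan is to obtain the $\Gm$-family version of the Riemann--Hilbert equivalence by bootstrapping directly from Proposition \ref{rhc}, together with the rescaling isomorphism recorded just before that proposition. First I would invoke the isomorphism
$$
\widetilde{M}_{\rm Hod}(X)\times _{\aaaa^1}\Gm \;\cong\; \widetilde{M}_{dR}(X)\times \Gm ,
$$
which sends a $\lambda$-connection $(\lambda ,F,\nabla ,V,\beta)$ with $\lambda\in\Gm$ to the pair consisting of the genuine connection $(F,\lambda^{-1}\nabla ,V,\beta)$ together with the value $\lambda$. Since this rescaling does not change the underlying bundle, the filtrations, or the framing, and only divides the residues by $\lambda$, it carries $\srG_{\rm Hod,\Gm}$ to $\srG_{dR}\times\Gm$: indeed the generating partial automorphisms $H_t, T_{t,i}, U_t$ of Section \ref{HGH} are defined in terms of the bundle and the filtrations alone, so they are visibly compatible with rescaling (the formulas for $\theta_{t,k}$ rescale accordingly, and in particular the locus $\theta_{t,i}\neq\theta_{t,i+1}$ where $T_{t,i}$ is defined is preserved). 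Hence we get an isomorphism of groupoids
$$
\bigl(\widetilde{M}_{\rm Hod,\Gm}(X),\,\srG_{\rm Hod,\Gm}\bigr)\;\cong\;\bigl(\widetilde{M}_{dR}(X),\,\srG_{dR}\bigr)\times\Gm .
$$

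Next I would apply Proposition \ref{rhc}, which furnishes an equivalence of groupoids $(\widetilde{M}_{dR}(X),\srG_{dR})\xrightarrow{\ \cong\ }(M_B,\srG_B)$. Taking the product with the identity on $\Gm$ then yields
$$
\bigl(\widetilde{M}_{dR}(X),\,\srG_{dR}\bigr)\times\Gm\;\xrightarrow{\ \cong\ }\;(M_B,\srG_B)\times\Gm ,
$$
and composing with the isomorphism of the previous paragraph gives the desired Riemann--Hilbert equivalence. The only genuine content beyond Proposition \ref{rhc} is the verification that the two structures being matched up really do live over the same base $\Gm$ in a way compatible with the $\lambda$-projection; this is immediate from the construction, since the Riemann--Hilbert map of Proposition \ref{rhc} is applied fiberwise in $\lambda$ via the rescaling.

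I do not expect a serious obstacle here: the statement is essentially a formal corollary, with all the real work already discharged in Proposition \ref{rhc} (well-definedness of $\widetilde{rh}$ modulo $\srG_B$, the fact that generators of $\srG_{dR}$ go to $\srG_B$, and the lifting of Betti relations to de Rham relations using Deligne's normalization). The one point that deserves a sentence of care is the compatibility of the generators $H_t$, $T_{t,i}$, $U_t$ with the rescaling $\nabla\mapsto\lambda^{-1}\nabla$ over a varying $\lambda\in\Gm$ — i.e. that the partial automorphisms assemble into a morphism over $\Gm$ and not merely fiber by fiber — but this follows because the Hecke modifications are algebraic in the data $(F,\nabla,V)$ and the locus where each is defined is cut out by the algebraic conditions $\theta_{t,i}\neq\theta_{t,i+1}$, which vary algebraically over $\Gm$. \eop
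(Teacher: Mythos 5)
Your proposal is correct and follows essentially the same route the paper intends: the corollary is treated as an immediate consequence of the rescaling isomorphism $\widetilde{M}_{Hod}(X)\times_{\aaaa^1}\Gm\cong\widetilde{M}_{dR}(X)\times\Gm$ stated just before Proposition \ref{rhc}, combined with that proposition applied fiberwise in $\lambda$. Your added remark that the generators $H_t$, $T_{t,i}$, $U_t$ and their domains are compatible with the rescaling (so the identification holds at the level of groupoids over $\Gm$, not just fiber by fiber) is exactly the small verification the paper leaves implicit.
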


\begin{corollary}
\label{bettipoints}
The set of points of the quotient space $\widetilde{M}_{\rm dR}(X) /  \srG_{\rm dR}$ 
is isomorphic to the set of points of $\widetilde{M}_B(X)  / \srG _B$, and this has the
following description: it is the set of isomorphism classes of framed local systems 
provided with a complete filtration of each generalized eigenspace
of the local monodromy operators, such that the filtrations are compatible
with the local monodromy operators, and such that 
the resulting structure satisfies the condition \ref{hypFLS} (which only depends
on the filtrations of the generalized eigenspaces). The same holds for points of 
$\widetilde{M}_{\rm Hod}(X) / \srG_{\rm Hod}$ over any $\lambda \neq 0$. 
\end{corollary}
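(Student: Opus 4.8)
The plan is to derive both assertions from Proposition~\ref{rhc} (respectively Corollary~\ref{rhcHod}) together with the explicit description of $\srG_B$ obtained earlier in this section. For the first statement, the equivalence of groupoids $(\widetilde{M}_{\mathrm{dR}}(X),\srG_{\mathrm{dR}}) \stackrel{\cong}{\longrightarrow} (M_B,\srG_B)$ of Proposition~\ref{rhc} induces in particular a bijection between the sets of isomorphism classes of objects, that is, between the point sets of the two quotients. So it suffices to identify the point set of $M_B/\srG_B$ with the stated description.

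For this, fix $t\in D$ and a point $(\rho,V_\cdot)\in M_B$, and decompose $\cc^r$ into the generalized eigenspaces $E_\alpha$ of the local monodromy $\rho(\gamma_t)$. Since each $V^t_i$ is $\rho(\gamma_t)$-invariant it splits as $V^t_i=\bigoplus_\alpha (V^t_i\cap E_\alpha)$, so the complete filtration $V^t_\cdot$ of $\cc^r$ is the same datum as: the induced complete filtrations $V^t_\cdot\cap E_\alpha$ of the $E_\alpha$ (each $\rho(\gamma_t)$-invariant), together with the combinatorial \emph{shuffle} recording which $E_\alpha$ contains the one-dimensional graded piece $V^t_i/V^t_{i-1}$ --- equivalently the ordered eigenvalue sequence $(\alpha^1(t),\ldots,\alpha^r(t))$. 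Recall from the discussion of the Betti moduli space that $\srG_B$ is exactly the union of the graphs of the partial automorphisms $(U_\sigma,P_\sigma)$, and that $P_\sigma$ does not change $\rho$ and reorders an adapted basis by a multi-permutation $\sigma$ that preserves the order of indices within each eigenvalue class; hence it fixes the filtrations $V^t_\cdot\cap E_\alpha$ and only alters the shuffle. Since any two shuffles compatible with given filtrations of the $E_\alpha$ differ by such a permutation at each $t$, the $\srG_B$-orbit of $(\rho,V_\cdot)$ consists precisely of those $(\rho,V'_\cdot)$ inducing the same filtration on every generalized eigenspace. Therefore the point set of $M_B/\srG_B$ is the set of framed local systems $\rho$ equipped with, for each $t\in D$, a complete filtration of each generalized eigenspace of $\rho(\gamma_t)$ compatible with $\rho(\gamma_t)$; and Hypothesis~\ref{hypFLS} depends only on this data, since an endomorphism commuting with each $\rho(\gamma_t)$ automatically preserves every $E_\alpha$, so that preserving $V^t_\cdot$ is equivalent to preserving all the $V^t_\cdot\cap E_\alpha$.

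For the statement over $\lambda\neq 0$, I would invoke the Riemann--Hilbert equivalence $(\widetilde{M}_{\mathrm{Hod},\Gm}(X),\srG_{\mathrm{Hod},\Gm}) \stackrel{\cong}{\longrightarrow} (M_B,\srG_B)\times\Gm$ of Corollary~\ref{rhcHod}: restricting over a fixed $\lambda\in\Gm$ yields an equivalence of groupoids $(\widetilde{M}_{\mathrm{Hod}}(X)_\lambda,\srG_{\mathrm{Hod}}|_\lambda)\cong(M_B,\srG_B)$, whence the same point-set description.

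The only step requiring genuine care is the middle one: checking that the equivalence relation generated on $M_B$ by the generators of $\srG_B$ is precisely ``same induced filtrations on the generalized eigenspaces'' --- that every admissible shuffle is realized (a purely combinatorial statement about interleavings of filtrations, once one uses that $\srG_B$ is the union of the graphs of the $(U_\sigma,P_\sigma)$) and that no further identifications creep in (because $P_\sigma$ leaves $\rho$ and the eigenspace filtrations untouched). Everything else is formal.
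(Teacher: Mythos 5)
Your proof is correct and follows exactly the route the paper intends for this corollary: transfer the question to the Betti side via Proposition~\ref{rhc} (resp.\ Corollary~\ref{rhcHod} restricted over a fixed $\lambda\in\Gm$), then use the fact that $\srG_B$ is the union of the graphs of the $(U_\sigma,P_\sigma)$ to see that an orbit is exactly ``same induced complete filtrations on the generalized eigenspaces of the $\rho(\gamma_t)$,'' with the shuffle data being precisely what the admissible multi-permutations act on transitively. The paper leaves this combinatorial verification (splitting each invariant $V^t_i$ across the $E_\alpha$, and the independence of Hypothesis~\ref{hypFLS} from the shuffle) implicit, and you have supplied it correctly.
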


\begin{proof}[Proof of Proposition \ref{injectivity}]
In the situation of Proposition \ref{injectivity}, compose with further words to get
into the region $R\subset \widetilde{M}_{\rm dR}(X)$ where 
the real parts of the $ \theta _{t_i,j}(F,\nabla , V_{\cdot})$
are all contained in an open interval 
$(-1+\epsilon , \epsilon)$. We have an open subset of $\srG_{dR}$ containing a point
on the diagonal. It maps to an open subset of $\srG_B$ containing a point of the
diagonal, so by Corollary \ref{diagonal} it is a subset of the diagonal in $\srG_B$. 
Now, since the map from $R$ to $M_B$ is injective, it follows that all the points
in the open subset are in the diagonal of $R$, showing the desired property
for Proposition \ref{injectivity}. 
\end{proof}

Let $X^c$ denote the complex conjugate curve defined by conjugating
all the coefficients of the equations defining $X$. Then 
there is a natural equivalence between the Betti groupoids for $X$ and $X^c$.
The Riemann-Hilbert equivalence of Corollary \ref{rhcHod} gives 
$$
(M_{Hod}(X)_{\Gm}, \srG _{Hod,\Gm}) \cong (M_{Hod}(X^c)_{\Gm}, \srG _{Hod,\Gm})
$$
over $\lambda \mapsto \lambda ^{-1}$ on $\Gm$. 
This identification can be used to define the Deligne-Hitchin groupoid
$ ( \widetilde{M}_{\rm DH}, \, \srG _{\rm DH} )$ as in \cite{AlfayaGomez,BeckHellerRoser,tgps}.

\section{Preferred sections of the Deligne-Hitchin moduli space}

Suppose given a harmonic bundle. For each $\lambda \in \aaaa^1$ and $b=(b_1,\ldots , b_k)\in \rr^k$
there is a vector bundle with 
logarithmic $\lambda$-connection $(F^{\lambda}_b,\nabla ^{\lambda})$.
These also have parabolic structures. 
Apply the procedure of Proposition \ref{pointwise}
to get a point of $\widetilde{M}_{Hod}(X)_{\lambda}$. 

By the technique to be described below, 
the point is well-defined modulo $\srG_{Hod}$, and this glues with the corresponding
construction on $X^c$ to define a set-theoretical section 
$\rho : \pp^1 \rightarrow ( \widetilde{M}_{\rm DH}, \, \srG _{\rm DH} )$.

The construction 
of Proposition \ref{pointwise} doesn't work well as $\lambda$ varies. 
Indeed, as we have seen, the parabolic levels where the filtrations jump
vary as a function of $\lambda$. So, for a fixed $b$, the bundles $F^{\lambda}_b$
will jump as ${\mathfrak p}_{\lambda}$ crosses $b$ modulo $\zz$. We don't immediately get a 
holomorphic section
$\aaaa^1\rightarrow \widetilde{M}_{\rm Hod}(X)$ in this way, and not even
a locally defined one. This difficulty was pointed out and resolved in 
\cite{Mochizuki,Sabbah}.

\subsection{Sabbah-Mochizuki filtrations}
\label{smfilt}

The problem of holomorphic variation of the filtration may be remedied by
considering only values of $b$ that don't cross the ${\mathfrak p}_{\lambda}$ for 
$\lambda \in U$ in a small neighborhood. 
This is due to one of the main constructions by
Sabbah and Mochizuki, see \cite{Mochizuki,Sabbah}. Work here near a fixed
singular point $t\in D$. 

More precisely, fix $\lambda _0\in \aaaa^1$ and consider a value $b_0$ that is a parabolic
level at $\lambda_0$. Then we may choose $b=b_0+\eta$ and $b' = b_0-\eta$ (with $2\eta < 1$)
and 
a small open neighborhood $\lambda _0\in U \subset \aaaa^1$, such that $b_0$ is the only
parabolic level between $b'$ and $b$ at $\lambda _0$, and such that none of the 
${\mathfrak p}_{\lambda}$ are equal to $b$ or $b'$ for any $\lambda \in U$. 
Then Mochizuki shows \cite{Mochizuki} 
that there is a bundle, that we could denote by $F^U_b$, over
$U\times \overline{X}$, whose restrictions to $\{ \lambda \} \times \overline{X}$ are
$F^{\lambda}_b$ for any $\lambda \in U$. It has subsheaf 
$F^U_{b'}\subset F^U_b$ such that 
${\rm Gr}^U_{(b',b)} := F^U_{b}/F^U_{b'}$ is a vector bundle over $U \times \{ y\}$. 
Furthermore, $F^{\lambda}_b$ has an operator $\nabla ^U$ that is a relative $\lambda$-connection,
here $\lambda$ denoting the coordinate in $U$, preserving $F^U_{b'}$, such that the
resulting operator ${\rm res}_{(b',b), y} (\nabla ^U)$ is a holomorphic endomorphism
of the bundle ${\rm Gr}^U_{(b',b)}$. 

At $\lambda = \lambda _0$, the fiber of ${\rm Gr}^U_{(b',b)}$ is just ${\rm gr}_{t,b_0}F^{\lambda_0}_{\cdot}$. By Hypothesis \ref{hypHB}, the (generalized) 
eigenspaces of 
${\rm res}_{t,(b',b),} (\nabla ^U)$ at $\lambda = \lambda _0$ have rank $1$. Therefore, possibly
shrinking $U$ we may assume that this is also the case for all $\lambda \in U$. This
gives a decomposition of ${\rm Gr}^U_{(b',b)}$ into a direct sum of line bundles that we'll
call the {\em eigen-bundles}. The eigenvalues of ${\rm res}_{t,(b',b)} (\nabla ^U)$ on these
bundles vary according to the formula \eqref{frakep} for ${\mathfrak e}_{\lambda}$. 

Choosing an ordering of the set of eigen-bundles yields a refinement of the filtration by $F^U_{b'}\subset
F^U_b$ into a filtration whose quotients are of rank $1$ over $t$. 

Doing this for each parabolic level $b_0$ in an interval of length $1$, we obtain a
quasi-parabolic structure and hence a holomorphic map $\rho ^U: U\rightarrow \widetilde{M}_{\rm Hod}(X)$.

\begin{lemma}
If $\lambda \in U$ then the image of $\lambda$ under this map differs from any
of the points constructed in Proposition \ref{pointwise} at $\lambda$ by an element of
the Hecke gauge groupoid $\srG _{Hod}$. 
\end{lemma}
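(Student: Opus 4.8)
The plan is to compare the two quasi-parabolic structures directly at a fixed $\lambda\in U$, singularity by singularity, and exhibit the change-of-ordering by a composition of the transpositions $T_{t,i}$ (together with the tensoring operations $U_t$ used to normalize the range of parabolic levels). First I would observe that both constructions produce, at the given $\lambda$, the \emph{same} underlying bundle-with-$\lambda$-connection: Proposition \ref{pointwise} uses the parabolic bundle $F^{\lambda}_b$ for some choice of levels $b$ lying in $(-1,0]$, while the Sabbah--Mochizuki construction uses $F^{U}_b$ restricted to $\{\lambda\}\times\overline X$, which by Mochizuki's theorem is $F^{\lambda}_{b}$ for the corresponding $b$; after applying the isomorphisms of multiplication by $(z-t_i)^{-k}$ (i.e. the operations $U_{t}^{\pm 1}$, which shift $\theta_{t,i}$ by $\mp\lambda$) we may arrange that both constructions start from $F^{\lambda}_{b}$ for one and the same $b$. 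So the only discrepancy is in the \emph{choice of complete filtration} refining the parabolic filtration at each $t\in D$.

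Next I would fix $t$ and analyze the filtrations. In both cases the fiber $F_{t_i,b_i}$ carries the filtration with associated graded $\bigoplus_{a\in(b_i-1,b_i]}\mathrm{gr}_{t_i,a}(F^{\lambda}_{\cdot})$, and each $\mathrm{gr}_{t_i,a}$ decomposes, by Hypothesis \ref{hypHB} (which via \eqref{frakep} carries genericity at $\lambda=0$ to distinctness at our $\lambda$), into one-dimensional generalized eigenspaces $\mathrm{gr}_{t_i,a,\alpha}$. A complete filtration compatible with $\mathrm{res}_{t_i}(\nabla^{\lambda})$ is then the same data as an ordering of the set of $r$ pairs $(a,\alpha)$ satisfying condition (O): when two pairs share the same eigenvalue $\alpha$, the one with smaller parabolic level $a$ must come first. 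The ordering produced by Proposition \ref{pointwise} satisfies (O) by construction; the Sabbah--Mochizuki ordering is chosen freely within each parabolic level $b_0$ but is \emph{automatically} consistent with (O) \emph{across} levels because within a single $\mathrm{gr}_{t_i,b_0}$ the eigenvalues are distinct (rank-$1$ eigen-bundles), so two pairs with equal $\alpha$ necessarily have different $a$ and the nesting $F^{U}_{b'}\subset F^{U}_{b}$ puts the lower-level one first. Hence both orderings satisfy (O).

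The conclusion then follows from the combinatorial fact that any two linear orderings of the $r$ pairs, both satisfying condition (O), are connected by a sequence of adjacent transpositions each of which swaps two consecutive pairs \emph{with distinct eigenvalues}; these are exactly the moves realized by the partial automorphisms $T_{t,i}$, whose domain of definition is $\{\theta_{t,i}\neq\theta_{t,i+1}\}$. More precisely, think of the pairs as a word; condition (O) says the subword of pairs with any fixed eigenvalue is in increasing order of $a$. Two (O)-orderings differ by a permutation that preserves the internal order of each such subword, hence lies in the ``parabolic'' subgroup generated by transpositions of letters in distinct subwords; a standard bubble-sort argument (analogous to the coset representative argument for $S_r$ modulo a Young subgroup) writes it as a product of adjacent transpositions that at no stage swap two equal-eigenvalue letters. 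Carrying this out simultaneously at each $t\in D$, and then undoing the $U_t$-normalizations, expresses the Proposition \ref{pointwise} point as $g\cdot\rho^{U}(\lambda)$ for a word $g$ in the generators $T_{t,i}$ and $U_t^{\pm1}$ of $\srG_{Hod}$, which is the assertion.

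The main obstacle is the across-levels bookkeeping: one must make sure that the adjacent transpositions used to reconcile the orderings \emph{within} one parabolic block $\mathrm{gr}_{t_i,b_0}$ do not, when composed, force a swap of two pairs with equal eigenvalue coming from \emph{different} blocks $b_0\neq b_0'$ — i.e. that the bubble-sort can be organized block-by-block so that it never leaves the domain of the relevant $T_{t,i}$. This is where condition (O), the nesting $F^U_{b'}\subset F^U_b$, and the distinctness of eigenvalues within each single block (Hypothesis \ref{hypHB}) are used together; once these are in hand the argument is a routine, if slightly fiddly, induction on the number of inversions between the two orderings.
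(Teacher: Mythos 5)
The paper states this lemma without proof (it is left to the reader), so there is no argument of the paper to compare against; the question is only whether your argument stands on its own. Its core does: comparing the two complete filtrations at a fixed $\lambda$ as orderings of the $r$ KMS pairs, observing that both satisfy condition (O) (within a Sabbah--Mochizuki block the eigenvalues stay distinct after the shrinking of $U$, and across blocks the nesting $F^U_{b'}\subset F^U_b$ orders the pairs by increasing level), and then realizing the change of ordering by a reduced (bubble-sort) word in the $T_{t,i}$, which never has to swap an equal-eigenvalue pair because such pairs are never inversions between two (O)-orderings. That part is correct, up to a small imprecision: a residue-invariant complete flag is not literally ``the same data'' as an (O)-ordering when an eigenvalue repeats across levels; one should add that both constructions produce flags compatible with the parabolic filtration, and such flags are determined by the ordering.

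The genuine gap is your opening reduction to a common cutoff. Tensoring by $\Oo_{\overline{X}}(t_i)$, i.e.\ the operations $U_t^{\pm 1}$, only shifts the cutoff $b_i$ by integers, whereas ``any of the points constructed in Proposition \ref{pointwise}'' involves an arbitrary cutoff $b\in\rr^k$ (this is how the construction is set up at the start of the section on preferred sections), and the Sabbah--Mochizuki construction uses its own cutoffs chosen to avoid the jumps over $U$; in general the two cutoffs differ by non-integral amounts, so $F^{\lambda}_{b}$ and $F^{\lambda}_{b'}$ are genuinely different locally free sheaves, related by elementary modifications at points of $D$ rather than by line-bundle twists. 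This is precisely what the Hecke generators $H_t$ are in the groupoid for: one moves the cutoff past one parabolic level at a time by first using $T_{t,i}$'s to bring the top-level eigenline to the last step of the flag (allowed, since condition (O) only forces equal-eigenvalue, hence lower-level, elements to precede it and all other swaps involve distinct residual eigenvalues) and then applying $H_t$, which replaces $F$ by $\ker\bigl(F\to i_{t,\ast}(F_t/V^t_{r-1})\bigr)$ and shifts the corresponding spectrum element by $(\theta,a)\mapsto(\theta+\lambda,\,a-1)$. As written, your word $g$ in $T_{t,i}$ and $U_t^{\pm1}$ only proves the lemma when the two cutoffs are congruent modulo $\zz^k$; that is not enough for the ``any'' in the statement, nor for its use in Corollary \ref{cor-defsec}, where overlapping neighborhoods carry non-congruent choices of cutoff. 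Adding the $H_t$-step (and checking, as above, that it stays inside the domains of the $T$'s) closes the gap.
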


\begin{corollary}
\label{cor-defsec}
We may choose the neighborhoods in such a way that for any two
with nonempty intersection, there is a unique holomorphic
map $U\cap U' \rightarrow \srG _{Hod}$ such that the composition
$$
U\cap U' \rightarrow \srG _{Hod} \hookrightarrow 
\widetilde{M}_{\rm Hod}(X) \times _{\aaaa^1}
\widetilde{M}_{\rm Hod}(X)
$$
coincides with the map $(\rho ^U |_{U\cap U'}, \rho ^{U'} |_{U\cap U'})$. 
Thus, the collection of sections $\rho ^U$ provides a well-defined map to the
groupoid
$$
\rho : \aaaa^1 \rightarrow  ( \widetilde{M}_{Hod}(X),  \srG _{Hod}).
$$
\end{corollary}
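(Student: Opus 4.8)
The sections $\rho^U$ were constructed holomorphically in subsection~\ref{smfilt} out of Mochizuki's holomorphic family, and the plan is: first to show that any two of them are, near each point of the overlap, related by a \emph{single} word in the generators of $\widehat G$; then to read off from this a holomorphic transition map $U\cap U'\to\srG_{Hod}$; and finally to glue the transition maps, using uniqueness to obtain the cocycle identity.

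Fix $\lambda_0\in U\cap U'$. After shrinking the two charts to a common neighbourhood of $\lambda_0$, the bundles-with-relative-$\lambda$-connection used to build $\rho^U$ and $\rho^{U'}$ are two parabolic extensions of one and the same family of $\lambda$-connections on $X$ over a neighbourhood of $\lambda_0$, for two choices $b,b'$ of parabolic levels admissible near $\lambda_0$ in the sense of subsection~\ref{smfilt}, together with two choices of orderings of the eigen-bundles of the graded pieces. Passing from the extension for $b$ to the one for $b'$ is a composition of the modifications $H_t$ and the twists $U_t$, and since no $\mathfrak p_\lambda$ crosses $b$ or $b'$ for $\lambda$ near $\lambda_0$, that composition is independent of $\lambda$ there. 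Passing between the two orderings is a composition of transpositions $T_{t,i}$; normalizing both orderings by condition~(O) of Proposition~\ref{pointwise} (ordering the graded pieces by parabolic level), any disagreement between them concerns only eigen-bundles with distinct residual eigenvalues, so these $T_{t,i}$ are defined at $\rho^U(\lambda_0)$---hence, by Hypothesis~\ref{hypHB} and continuity, on a neighbourhood---and the permutation involved is again independent of $\lambda$. Therefore there is one word $A\in\widehat G$, defined at $\rho^U(\lambda_0)$, with $A\circ\rho^U=\rho^{U'}$ on a neighbourhood $W\ni\lambda_0$ in $U\cap U'$; nothing in this argument excludes $\lambda_0=0$, so $W$ is allowed to meet $\lambda=0$. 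This is consistent with, and sharpens, the pointwise statement of the preceding Lemma.

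Since the domain of $A$ is a dense Zariski open subset of $\widetilde M_{Hod}(X)$, the map $\lambda\mapsto\bigl[\rho^U(\lambda),\mathrm{dom}A,A\bigr]$ is a holomorphic section $\sigma_A\colon W\to\srG_{Hod}$ with $(s,t)\circ\sigma_A=(\rho^U|_W,\rho^{U'}|_W)$. Covering $U\cap U'$ by such neighbourhoods, two of the corresponding sections agree on the overlap: over $\Gm$ this is the injectivity of $(s,t)$ from the corollary to Proposition~\ref{injectivity}, and at a point of the overlap lying over $\lambda=0$ it follows because $s$ is a local homeomorphism, $\Gm$ is dense, and $\widetilde M_{Hod}(X)$ is separated (Theorem~\ref{modulispace}), so two sections with equal composite with $s$ that agree on the dense subset of non-zero $\lambda$'s coincide there too. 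Hence the $\sigma_A$ patch to a holomorphic $g_{UU'}\colon U\cap U'\to\srG_{Hod}$ with $(s,t)\circ g_{UU'}=(\rho^U|_{U\cap U'},\rho^{U'}|_{U\cap U'})$, and the same comparison shows $g_{UU'}$ is the unique section with this property. Uniqueness forces the cocycle identity $g_{U'U''}\cdot g_{UU'}=g_{UU''}$ on triple overlaps, so the data $(\rho^U)$ and $(g_{UU'})$ assemble into the map $\rho\colon\aaaa^1\to(\widetilde M_{Hod}(X),\srG_{Hod})$.

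The step I expect to be the main obstacle is the core claim of the second paragraph: that the change between two Sabbah--Mochizuki charts is effected by an honest word in $H_t,T_{t,i},U_t$, that such a word may be chosen constant on a neighbourhood reaching across the special value $\lambda=0$, and that all the transpositions occurring in it are defined at the relevant point. This requires knowing that the elementary modifications produced by changing $b$, composed with the reorderings produced by changing the orderings, land in the monoid $\widehat G$ and behave coherently under composition---precisely the coherence of the Hecke gauge groupoid singled out in the introduction as the crux of the paper---and it must be carried out without Proposition~\ref{injectivity}, which is available only over $\Gm$.
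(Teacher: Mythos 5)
There is a genuine gap, and it sits exactly where you predicted: the core claim of your second paragraph. You never use the freedom contained in the statement's opening clause ``we may choose the neighborhoods in such a way that\dots'', and without it the claim is false. The problematic points are not so much $\lambda_0=0$ (the case you worry about) as the set $\Delta\subset\Gm$ of parameters where two distinct KMS-spectrum elements acquire equal residual eigenvalues modulo $\lambda\zz$, via the functions $\lambda\mapsto a\lambda^{-1}+b+c\lambda$ coming from \eqref{frakep}; Hypothesis \ref{hypHB} makes $\Delta$ discrete in $\Gm$ but does not make it empty, and your argument never mentions it. At a point $\lambda_0\in\Delta$ lying in $U\cap U'$, the two Sabbah--Mochizuki charts can genuinely disagree on two pieces whose residual eigenvalues coincide at $\lambda_0$: one chart may see them inside a single graded piece ${\rm Gr}^U_{(b',b)}$ (their $\mathfrak p$-levels coincide at that chart's center) and order its eigen-bundles by an arbitrary choice, while the other chart sees them at distinct parabolic levels and orders them by level. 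Then the required transposition $T_{t,i}$ is undefined at $\rho^U(\lambda_0)$ --- its domain demands $\theta_{t,i}\neq\theta_{t,i+1}$ --- and no other word in $H_t,T_{t,i},U_t$ relates $\rho^U(\lambda_0)$ to $\rho^{U'}(\lambda_0)$; for the same reason your preliminary ``normalization by condition (O)'' cannot be performed there, since it would itself require exchanging pieces with equal eigenvalues. So the single constant word $A$ of your second paragraph need not exist on a neighborhood of such $\lambda_0$, and in fact the transition map asserted by the corollary need not exist at all over $\Delta$ for an arbitrary system of charts.

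The paper's proof consists precisely of the covering choice you skipped: define $\Delta$, prove its discreteness in $\Gm$ from Hypothesis \ref{hypHB}, then choose one chart $U_0$ at $\lambda=0$, small charts at the points of $\Delta$ not in $U_0$, and charts avoiding $\Delta\cup\{0\}$ elsewhere, so that every pairwise intersection $U\cap U'$ lies in $\Gm-\Delta$. Over such an intersection all residual eigenvalues are distinct, so the pointwise relation furnished by the preceding Lemma upgrades to a holomorphic map $U\cap U'\to\srG_{Hod}$, and uniqueness (hence the cocycle property and the well-defined $\rho$) follows from the injectivity of $(s,t)$ over $\Gm$ given by the corollary to Proposition \ref{injectivity}. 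Your third paragraph (patching, uniqueness via density and separatedness, cocycle identity) is fine in outline, but it rests on the unestablished second paragraph; to repair the proof you should replace that paragraph by the construction of the covering adapted to $\Delta\cup\{0\}$, rather than trying to produce transition words over arbitrary overlaps.
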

\begin{proof}
Let $\Delta \subset \Gm$ be the set of points where two different KMS-spectrum
elements have the same eigenvalues, modulo $\lambda \zz$, at some point of $D$. 
It is discrete in $\Gm$, since it is a finite union of inverse images of $\zz$ by 
functions of the form $\lambda \mapsto a\lambda ^{-1} + b + c\lambda$ that
are not $\zz$-valued constants due to Hypothesis \ref{hypHB}. 
Choose a neighborhood $U_0$
at $\lambda = 0$, then small neighborhoods at the 
points of $\Delta$ that are not in $U_0$; then cover the remainder of $\aaaa^1$ by neighborhoods that
don't intersect $\Delta \cup \{ 0\}$. With this convention, for any two different
neighborhoods $U$ and $U'$, the intersection $U\cap U'$ is contained in $\Gm - \Delta$,
in other words the eigenvalues of the residues are distinct over $U\cap  U'$. 
Over this intersection,
two holomorphic sections of $\widetilde{M}_{Hod}(X)$ 
that differ pointwise by elements of $\srG _{Hod}$ 
differ by a holomorphic function $U\cap U' \rightarrow \srG _{Hod}$.
\end{proof}

\subsection{Preferred sections}

\begin{theorem}
\label{twistorsections}
Given a framed tame harmonic bundle, and assuming Hypothesis \ref{hypHB}, 
the sections defined in Corollary \ref{cor-defsec} for $X$ and $X^c$ glue
to a holomorphic {\em preferred section} of the Deligne-Hitchin groupoid
$$
\rho : \pp^1\rightarrow (  \widetilde{M}_{\rm DH}(X), \, \srG_{\rm DH} )  .
$$
\end{theorem}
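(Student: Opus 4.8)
The plan is to combine the three ingredients already assembled: the holomorphic sections $\rho^U : U \to (\widetilde{M}_{\rm Hod}(X), \srG_{\rm Hod})$ from Corollary \ref{cor-defsec} over the standard chart $\aaaa^1 \subset \pp^1$; the analogous sections over the conjugate chart coming from the conjugate harmonic bundle on $X^c$; and the Riemann--Hilbert identification of Corollary \ref{rhcHod}, which is precisely the glueing map used to build $(\widetilde{M}_{\rm DH}, \srG_{\rm DH})$. So the only real content is to check that these two families of local sections agree, as sections of the groupoid, over the overlap $\Gm \subset \pp^1$ where the two charts meet.

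First I would fix $\lambda \in \Gm$ and unwind what each side produces there. On the $X$-side, $\rho^U(\lambda)$ is (modulo $\srG_{\rm Hod}$) the quasi-parabolic $\lambda$-connection extracted from the parabolic bundle $(F^\lambda_b, \nabla^\lambda)$ by a choice of Sabbah--Mochizuki ordering of eigen-bundles. On the $X^c$-side, over the conjugate chart the relevant parameter is $\lambda^{-1}$, and the conjugate harmonic bundle yields a parabolic $\lambda^{-1}$-connection; rescaling to a flat connection and applying the Riemann--Hilbert functor $\widetilde{rh}$ of Proposition \ref{rhc} lands it in $(M_B(X^c), \srG_B)$. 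The key point, which is exactly the content of Remark \ref{conj}, is that the flat connection $D^\lambda = \partial + \overline\partial + \lambda^{-1}\varphi + \lambda\overline\varphi$ on $X$ and the flat connection attached to $\lambda^{-1}$ on $X^c$ have canonically isomorphic sheaves of flat sections under the complex-conjugation homeomorphism $X^{\rm top} \cong (X^c)^{\rm top}$, and moreover the local filtrations at points near each $t_i$, defined by order of growth of flat sections, match up. Thus the framed filtered local system one obtains from the $X$-side via $\widetilde{rh}$ and the one from the $X^c$-side are identified in $(M_B, \srG_B)$; translating back through Corollary \ref{rhcHod} on both sides shows that the two local sections of the Hodge/de Rham groupoids are carried to one another by the glueing isomorphism defining $\srG_{\rm DH}$.

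Having matched the sections pointwise over $\Gm$ modulo the groupoid, I would then promote this to a genuine holomorphic section of $(\widetilde{M}_{\rm DH}, \srG_{\rm DH}) \to \pp^1$ in the same way Corollary \ref{cor-defsec} did within a single chart: the two charts' section data, together with the transition data over $\Gm$, descend to a well-defined map $\pp^1 \to (\widetilde{M}_{\rm DH}, \srG_{\rm DH})$ because any two local lifts differing pointwise by elements of the groupoid differ by a holomorphic map to the groupoid, using the injectivity of $(s,t)$ over $\Gm$ (the corollary to Proposition \ref{injectivity}) exactly as the diagonal argument was used before. Holomorphicity over $\aaaa^1$ and over the conjugate chart is already contained in Corollary \ref{cor-defsec} applied to $X$ and to $X^c$, and holomorphicity of the transition is inherited from the holomorphicity of the Riemann--Hilbert equivalence.

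The main obstacle is the comparison in the middle paragraph: verifying that the Sabbah--Mochizuki extraction of a quasi-parabolic structure on the $X$-side and the one on the $X^c$-side are intertwined by Riemann--Hilbert, including the \emph{ordering} data. Remark \ref{conj} supplies the statement that the growth filtrations of flat sections agree under conjugation, but one must be careful that the eigen-bundle orderings chosen in subsection \ref{smfilt} on the two sides can be chosen compatibly; since any two orderings differ by an element of $\srG_{\rm Hod}$ over points where the residual eigenvalues are distinct --- which is the case over $\Gm - \Delta$, hence over the overlaps arising from the covering chosen in the proof of Corollary \ref{cor-defsec} --- this ambiguity is absorbed into the groupoid and causes no genuine difficulty. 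The remaining routine checks --- that $\widetilde{rh}$ intertwines the residue eigenvalue data on $X$ at $\lambda$ with that on $X^c$ at $\lambda^{-1}$ via the formulas \eqref{frakep}, and that Hypothesis \ref{hypFLS} is preserved --- follow from Proposition \ref{rhc} and Corollary \ref{bettipoints}.
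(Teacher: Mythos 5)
Your proposal is correct and follows essentially the same route as the paper: define the section on each chart via Corollary \ref{cor-defsec}, verify pointwise agreement over $\Gm$ through the Riemann--Hilbert/Betti description (Corollary \ref{bettipoints}) together with the identification of growth filtrations of flat sections in Remark \ref{conj}, and then upgrade pointwise agreement to holomorphic glueing by the same argument as in Corollary \ref{cor-defsec}. Your extra attention to the ordering ambiguity being absorbed by $\srG_{\rm Hod}$ over $\Gm-\Delta$ is a useful elaboration of a point the paper leaves implicit, but it is not a different method.
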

\begin{proof}
Over the chart $\aaaa^1\subset \pp^1$, the section is defined using Corollary \ref{cor-defsec}. 
On the other chart, we do the same construction using the complex conjugate
harmonic bundle on $\overline{X}$ to get a section defined on the other chart 
$\aaaa^1\subset \pp^1$,
the neighborhood of $\infty$. These glue together to define a section into 
$ (\widetilde{M}_{\rm DH}(X), \, \srG_{\rm DH} )$. Pointwise agreement 
may be seen by using the
description of points in Corollary \ref{bettipoints}, together with the 
identification between the filtrations of sheaves of flat sections by order of growth of
Remark \ref{conj}. Then argue as in \ref{cor-defsec} to obtain the glueing. 
\end{proof}

\section{The neighborhood of a preferred section}

Our next goal is to understand the structure of the neighborhood of a preferred section. 
We state here the result for the normal or relative tangent bundle. 
Let $T( (\widetilde{M}_{\rm DH}(X), \srG_{\rm DH})/\pp^1)$ 
denote the relative tangent bundle of the 
Deligne-Hitchin groupoid over $\pp^1$. 

\begin{theorem}
\label{twistorproperty}
Let $\rho$ be a preferred section associated to a tame harmonic bundle by 
Theorem \ref{twistorsections}. The pullback 
$T_{\rho} := \rho ^{\ast} T((\widetilde{M}_{\rm DH}(X), \srG_{\rm DH})/\pp^1)$ 
is a mixed twistor
structure \cite{twistor} with weight filtration characterized by: 
\begin{itemize}

\item 
$W_{-1}=0$

\item
$W_0$ is the trivialized tangent space of the change of framing

\item
$W_1$  is the subspace of deformations that fix the eigenvalues of the residues 

\item
$W_2$ is the full space. 

\end{itemize}
\end{theorem}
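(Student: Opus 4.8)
The plan is to analyze $T_\rho$ by restricting to the standard chart $\aaaa^1 \subset \pp^1$, computing the relative tangent space via deformation theory of framed quasi-parabolic logarithmic $\lambda$-connections, and then matching the resulting bundle on $\pp^1$ against the three model twistor structures of weights $0$, $1$, $2$. Concretely, I would first write down an explicit description of the fiber $T_{\rho,\lambda}$ for $\lambda \in \aaaa^1$: since Hypothesis~\ref{hypFQPLLC} guarantees framed objects have no endomorphisms and the moduli problem is unobstructed, the deformation space is a hypercohomology group of a complex of sheaves on $\overline{X}$ built from $\mathrm{End}(F)$ together with the logarithmic-connection differential and the parabolic/framing constraints. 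The subtlety is that the Hecke gauge groupoid $\srG_{\rm DH}$ acts, but because $\rho$ has a separated smooth analytic neighborhood (Theorem~\ref{maintheorem}), near $\rho$ one may pass to that neighborhood and the relative tangent bundle is an honest rank-$d$ vector bundle on $\pp^1$ whose fibers are the above hypercohomology groups; this is the object to be identified.

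Next I would produce the weight filtration as a filtration by sub-vector-bundles on all of $\pp^1$, not just fiberwise. For $W_0$: changes of framing $\beta: F_x \xrightarrow{\cong} \cc^r$ contribute a trivial summand $\cc^{r^2}/\cc$ (framings modulo scalars) that is visibly constant in $\lambda$ and glues to the trivial bundle $\Oo_{\pp^1}^{\oplus (r^2-1)}$ under the Deligne gluing of Remark~\ref{conj}; this gives $W_0$, of weight $0$. For $W_1$: the deformations that fix the residual eigenvalues $\theta_{t,j}$ are, modulo framing, exactly the infinitesimal deformations of the underlying local system with its parabolic refinement holding the KMS spectrum pointwise fixed — these are the tangent directions along the symplectic (hyperkähler) leaves, and here I would invoke the identification in Remark~\ref{conj} together with the standard fact that the Deligne-Hitchin twistor line for the leaf moduli space realizes the tangent bundle as $\Oo_{\pp^1}(1)^{\oplus m}$ (weight $1$); the cited works \cite{Nakajima,Holdt,twistor} supply the hyperkähler/twistor package. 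The quotient $W_2/W_1$ is then identified with the space of infinitesimal variations of the KMS spectrum at the points of $D$: from the exact sequence of deformation complexes, $W_2/W_1 \cong \bigoplus_{t\in D}\bigoplus_j T_{\theta_{t,j}}\cc$, and the action of $\lambda$ on this piece is governed by the Sabbah–Mochizuki formula \eqref{frakep} for ${\mathfrak e}_\lambda(a,\alpha) = \alpha - a\lambda + \overline{\alpha}\lambda^2$, which is quadratic in $\lambda$ with a $\overline\lambda$-antiholomorphic term — precisely the transition function of $\Oo_{\pp^1}(2)$ in a twistor (i.e. $\sigma$-conjugate-linear-gluing) sense. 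So $W_2/W_1$ is a pure twistor structure of weight $2$, and one refers to \cite{weight2,tgps} for the verification that \eqref{frakep} exhibits the weight-$2$ gluing.

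I would then assemble these: $0 = W_{-1} \subset W_0 \subset W_1 \subset W_2 = T_\rho$ with $\mathrm{Gr}^W_0$ pure of weight $0$, $\mathrm{Gr}^W_1$ pure of weight $1$, $\mathrm{Gr}^W_2$ pure of weight $2$, which is exactly the definition of a mixed twistor structure from \cite{twistor}. The key technical points requiring care are (i) checking that $W_0, W_1$ are genuinely holomorphic subbundles over the whole of $\pp^1$ and not merely fiberwise subspaces — this uses the holomorphic variation established in the Sabbah–Mochizuki construction of subsection~\ref{smfilt} together with the gluing in Corollary~\ref{cor-defsec} and Theorem~\ref{twistorsections}; and (ii) that the Deligne gluing at $\infty$ intertwines the $X$-side and $X^c$-side filtrations compatibly, which follows from Remark~\ref{conj}.

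**Main obstacle.** The hard part will be step (i): showing that the eigenvalue-fixing subspace $W_1$ and its complement-in-weight $W_2/W_1$ vary holomorphically across the bad points $\Delta \subset \Gm$ (where residual eigenvalues collide modulo $\lambda\zz$) and across $\lambda = 0$ and $\lambda = \infty$, so that $W_1$ extends to a subbundle over all of $\pp^1$ rather than degenerating where the Hecke groupoid acts nontrivially. This is where one genuinely needs the Sabbah–Mochizuki bundle $F^U_b$ over $U \times \overline X$ of subsection~\ref{smfilt} and the injectivity statement Proposition~\ref{injectivity}, to ensure that passing to the separated neighborhood of $\rho$ does not lose or create tangent directions near those loci. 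Once the filtration is known to consist of subbundles with the stated graded pieces, identifying each graded piece with the model twistor structure $\Oo_{\pp^1}(w)^{\oplus n_w}$ is a matter of reading off transition functions — the weight-$0$ piece from the constancy of framings, the weight-$1$ piece from the hyperkähler structure on symplectic leaves, and the weight-$2$ piece directly from \eqref{frakep}.
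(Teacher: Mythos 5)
Your decomposition of $T_\rho$ into framing directions, eigenvalue-fixing directions, and KMS-spectrum directions matches the intended weight filtration, and your flagged obstacle (holomorphy of $W_1$ across $\Delta$, $\lambda=0$ and $\lambda=\infty$) is real. But the proposal has a genuine gap precisely at its load-bearing step: the purity of weight $1$ for $W_1/W_0$. You invoke ``the standard fact that the Deligne--Hitchin twistor line for the leaf moduli space realizes the tangent bundle as $\Oo_{\pp^1}(1)^{\oplus m}$'' via the hyperk\"ahler structure on symplectic leaves. In the tame parabolic setting this is not an off-the-shelf fact: it requires knowing that the preferred section, restricted to the eigenvalue-fixing directions, is a twistor line of the hyperk\"ahler structure on the leaf (equivalently, that the Deligne glueing at $\infty$ and the Sabbah--Mochizuki extensions across $D$ reproduce the twistor space of that hyperk\"ahler metric), and this identification is essentially equivalent to the weight-$1$ assertion you are trying to prove. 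The paper does not argue this way: its proof defers to the rank-$2$ case in \cite{tgps}, where the weight-$1$ (and weight-$2$) purity is extracted from the pure twistor ${\mathcal D}$-module theory of Mochizuki and Sabbah \cite{Mochizuki,Sabbah} (alternatively Mochizuki's mixed twistor ${\mathcal D}$-module ${\mathfrak T}[\ast D][!x]$ of \cite{MochizukiMTM}); that harmonic-theoretic input is exactly what replaces your citation to hyperk\"ahler folklore, and without it the middle graded piece is unproven.

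Two smaller points. First, for $W_2/W_1$ it is not enough to observe that ${\mathfrak e}_\lambda(a,\alpha)=\alpha-a\lambda+\overline{\alpha}\lambda^2$ ``looks like'' a section of $\Oo_{\pp^1}(2)$: formula \eqref{frakep} describes how the eigenvalues of the preferred section itself vary, whereas the claim concerns the normal directions $d\theta_{t,j}$, so you must compute how these tangent coordinates transform under the Riemann--Hilbert/Deligne glueing at $\infty$ (where the de Rham residues pass through $\exp(-2\pi i\theta/\lambda)$-type Betti data on the conjugate curve) and under the Hecke gauge groupoid; this is the weight-$2$ computation carried out in \cite{weight2,tgps}, not a direct reading of \eqref{frakep}. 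Second, your reduction to an honest vector bundle near $\rho$ quietly uses the separated neighborhood of Proposition \ref{cpt-nbd}, which in the paper's logical order is established independently of Theorem \ref{twistorproperty}; that part is fine, but the purity statements for the graded pieces still need the Mochizuki--Sabbah machinery rather than the assembly argument you describe.
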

\begin{proof}
One may do a proof similar to the proof for rank $2$,
based on the pure twistor ${\mathcal D}$-module theory of \cite{Mochizuki,Sabbah}, 
as described in \cite{tgps}. 
See also the remarks there on an alternative strategy that was sketched by Mochizuki using a
mixed twistor ${\mathcal D}$-module 
\cite{MochizukiMTM} denoted ${\mathfrak T}[\ast D][!x]$. 
\end{proof}

The mixed twistor property means that 
$W_k/W_{k-1}\cong \Oo _{\pp^1}(k)^{\oplus n_k}$ for $k=0,1,2$.
The weight $1$ subquotient 
$W_1/W_0$ is the tangent of moduli of connections 
fixing the local conjugacy classes, yielding
a hyperkähler direction \cite{Nakajima,Holdt}. The piece $W_2/W_1$ governs the change of 
KMS spectrum elements, for which the weight $2$ philosophy has been discussed 
in \cite{weight2,tgps}.

\subsection{Construction of a separated neighborhood}

The tangential result of Theorem \ref{twistorproperty} leads naturally to a mixed twistor property
for the full formal neighborhood. The first task is to construct a separated neighborhood
of a preferred section, within the non-separated moduli space groupoid. 

In our notational conventions\footnote{Letters may however be used differently in this subsection
than in the rest of the paper.}, 
a smooth analytic groupoid consists of a separated complex
manifold $Y$ together with a complex manifold also denoted by $\srG$ provided with 
maps 
$$
s,t: \srG \rightarrow Y  \mbox{ (etale)}, \;\;\; 
m: \srG \times _Y \srG \rightarrow Y, \;\;\; e: Y\rightarrow \srG
$$
making a groupoid internal to the category of smooth analytic spaces.

Suppose given a separated smooth complex manifold $Z$ with a smooth
$\srG$-invariant 
map $p : Y \rightarrow Z$ 
and a section $f : Z \rightarrow (Y,\srG )$.  The section is 
given by an open covering $\{ Z_i\subset Z\}$, maps
$f_i: Z_i \rightarrow Y$
and $\rho _{ij} : Z_{ij} \rightarrow \srG$ with $p \circ f_i = Id _{Z_i}$. 
In the application $Z$ will be the twistor $\pp^1$ but for inductive purposes we envision 
$Z$ noncompact provided with a compact subset. 

\begin{proposition}
\label{cpt-nbd}
In the above situation, suppose $K\subset Z$ is a compact subset.  
Then there exists an open neighborhood 
$K\subset Z' \subset Z$ and a separated 
complex manifold $X'$ with a smooth map $X'\rightarrow Z'$ and 
an etale map $e':X'\rightarrow (Y,\srG )$ together with a lifting $f ' : Z' \rightarrow X'$
and an isomorphism $e\circ f' \cong f|_{Z'}$, i.e. a map $Z'\rightarrow \srG$
 source $e\circ f'$ and target $f|_{Z'}$. 
\end{proposition}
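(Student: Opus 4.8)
The plan is to build $X'$ by patching together the local charts $f_i(Z_i)\subset Y$ along the section, using the groupoid $\srG$ to perform the gluing, and then to shrink $Z$ to a neighborhood $Z'$ of the compact set $K$ small enough that the patching is separated. First I would cover $K$ by finitely many of the $Z_i$, say $i=1,\ldots,N$, and replace each $Z_i$ by a relatively compact subset still covering $K$; since $K$ is compact this is possible, and the finiteness is what makes the construction manageable. For each $i$ choose an open neighborhood $Y_i$ of $f_i(Z_i)$ in $Y$ on which $p$ is a submersion with a section, i.e. $Y_i\cong Z_i\times B_i$ for a ball $B_i$, shrinking so that the transition data below is controlled. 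The candidate manifold is
$$
X' := \Bigl( \coprod_{i=1}^N Y_i \Bigr) \big/ \approx,
$$
where a point of $Y_i$ lying over $z\in Z_{ij}$ is identified with its image under the germ of $\srG$-action determined by $\rho_{ij}(z)$. Because $\srG$ has etale source and target and the $\rho_{ij}$ satisfy the cocycle relation $\rho_{jk}\rho_{ij}=\rho_{ik}$ over $Z_{ijk}$ (part of the data of a section into a groupoid), the relation $\approx$ is, after shrinking the $Y_i$, an etale equivalence relation: the two projections $R\rightrightarrows\coprod Y_i$ are etale, so the quotient $X'$ is a smooth (a priori non-separated) complex manifold, and it comes with an evident etale map $e':X'\to(Y,\srG)$ and a map $X'\to Z':=\bigcup_i Z_i\supset K$.

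Next I would produce the lift $f':Z'\to X'$. The $f_i:Z_i\to Y_i$ are compatible with $\approx$ precisely because $\rho_{ij}(z)$ has source $f_i(z)$ and target $f_j(z)$, so they glue to a single section $f'$ of $X'\to Z'$, and by construction $e'\circ f'$ differs from $f|_{Z'}$ by the tautological map $Z'\to\srG$ recording the identifications — this is the isomorphism $e\circ f'\cong f|_{Z'}$ demanded in the statement. The remaining, and main, point is separatedness of $X'$ near the image of $f'$; in general the quotient of a manifold by an etale equivalence relation is only locally separated, which is exactly the non-separatedness already present in $\widetilde M_{\rm DH}$. The key observation is Corollary \ref{diagonal} (together with Proposition \ref{injectivity} and its corollary): any connected open subset of $\srG_B$, hence of $\srG_{\rm DH}$, meeting the diagonal is contained in the diagonal, so along the section itself the only identifications are the identity. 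Concretely, the "bad" locus is the closure of $R$ minus $R$ inside $X'\times X'$; it is disjoint from the image of the diagonal of $Z'$ under $f'\times f'$ by Corollary \ref{diagonal}. Since $K$ is compact, $f'(K)$ is compact, so there is an open neighborhood of $f'(K)$ in $X'$ over which this bad locus is empty; intersecting that neighborhood with $p'^{-1}(Z'')$ for a suitable smaller neighborhood $K\subset Z''\subset Z'$ (using properness of $p'$ over the compact $f'(K)$) gives a separated open submanifold of $X'$ containing $f'(K)$, which we rename $X'$ and $Z'$. This is the step I expect to be the real obstacle: one must check, uniformly over the compact $K$, that no sequence of points identified by longer and longer words in $\widehat G$ accumulates onto the section — but this is controlled by the local injectivity statement Proposition \ref{injectivity} (via Riemann–Hilbert and Corollary \ref{diagonal}), so the argument closes.

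Finally, smoothness of $X'\to Z'$ and etaleness of $e'$ are inherited from the $Y_i\to Z_i$ and from etaleness of $s,t:\srG\to Y$ respectively, and $f'$ is a section of $X'\to Z'$ by construction; this completes the proof. I would remark that the induction alluded to in the statement ("for inductive purposes we envision $Z$ noncompact provided with a compact subset") is used only to exhaust $Z$ by such compact $K$'s when $Z$ is eventually taken to be all of $\pp^1$, in which case a single $K=\pp^1$ suffices and no induction is needed; the general formulation is kept for potential reuse.
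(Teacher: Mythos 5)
Your gluing construction of $X'$ and of the lift $f'$ is reasonable in outline, but the separatedness step — which you yourself identify as the crux — has a genuine gap. Knowing that the bad locus $B=\overline{R}\setminus R$ is disjoint from the compact set of section pairs over $K$ does not produce a neighborhood of $f'(K)$ on which $B$ is empty, because $B$ need not be closed: pairs in $B$ can accumulate onto pairs in $R$, and in particular onto pairs lying on the section. What is needed is a uniform non-accumulation statement: no sequence of $R$-identified pairs, identified by elements whose domains are not closed (e.g. the transpositions $T_{t,i}$, defined only where $\theta_{t,i}\neq\theta_{t,i+1}$), converges to a non-identified pair arbitrarily near the section over $K$. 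The results you invoke do not supply this: Proposition \ref{injectivity} only says that two words agreeing at a single point over $\lambda\neq 0$ agree on a neighborhood, and Corollary \ref{diagonal} concerns connected open subsets of the Betti groupoid; neither rules out the accumulation mechanism, and neither is available at $\lambda=0,\infty$, which do lie in $K=\pp^1$ in the intended application and are precisely where the preferred section meets the delicate locus (eigenvalue collisions, no Riemann--Hilbert). A further warning sign is that the proposition is a purely formal statement about an arbitrary smooth analytic groupoid with a $\srG$-invariant map $p$ and a section, so its proof should not need moduli-specific input at all.

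The paper's proof is formal and structured differently: an induction on the charts $Z_i$ of the section (adding previously constructed solutions as new charts) reduces to gluing two separated pieces $Y_1,Y_2$ along an overlap $W$ furnished by the uniqueness Lemma \ref{uniqueness}; the map to $Z$ is used crucially to prove $\nu_1^{-1}(f_1(K_1))\cap\nu_2^{-1}(f_2(K_2))=f_W(K_{12})$ (the paper remarks the statement is false without it); and separation of the pushout is then forced by hand, by taking a relatively compact $W'\supset f_W(K_{12})$ and excising from each $Y_i$ a compact set $T_i$ covering part of $\partial W'$ and disjoint from $f_i(K_i)$, so that any sequence in $W'$ with limit points in both charts must converge inside $W'$. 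This excision of compact collars is exactly the step your ``shrink a neighborhood of $f'(K)$'' elides. Finally, your closing remark misreads the role of the induction: it is an induction over the charts of the section, needed even when $K=\pp^1$, and the allowance for noncompact $Z$ exists because the intermediate solutions are only defined on neighborhoods $Z'$ of compact pieces — not because one exhausts $Z$ by compact sets.
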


We first prove uniqueness. 

\begin{lemma}
\label{uniqueness}
If it exists, $(Z',X',e')$ is unique up to reducing the size of the neighborhoods, 
up to unique isomorphism.  
\end{lemma}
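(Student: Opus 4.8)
The plan is to reduce the uniqueness statement to the étale-descent property of germs, using the fact that the source and target maps of $\srG$ are étale. Suppose $(Z',X',e')$ and $(Z'',X'',e'')$ are two solutions, with liftings $f': Z'\to X'$ and $f'':Z''\to X''$; after intersecting we may assume $Z'=Z''$. The key point is that an étale map to the groupoid $(Y,\srG)$ together with a lifting of the section determines, near the image of $f'$, an actual local isomorphism with $Y$: since $e':X'\to(Y,\srG)$ is étale and $X'$ is separated, for each point $z\in Z'$ there is an open neighborhood $X'_z\subset X'$ of $f'(z)$ on which $e'$ is represented by an open embedding $X'_z\hookrightarrow Y$ composed with an element of $\srG$ — concretely, $e'$ restricted to $X'_z$ factors as an étale map onto an open $\srG$-saturated piece of $Y$. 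The same holds for $X''$. The isomorphisms $e\circ f'\cong f|_{Z'}$ and $e\circ f''\cong f|_{Z'}$ then provide, over a common refinement of the open coverings defining $f$, a section $Z'\to\srG$ identifying the two germs of the section in $Y$, and hence — because both $e',e''$ are étale over this germ — a canonical isomorphism of germs $(X',f'(Z'))\cong(X'',f''(Z''))$ compatible with the maps to $(Y,\srG)$ and with the liftings.

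First I would make precise the local picture: fix $z\in K$; choose indices so that $z\in Z_i$ and the section datum is $(f_i,\rho_{ij})$; since $p\circ f_i=\mathrm{Id}$ and $p$ is smooth, shrink $Z_i$ so that $f_i(Z_i)$ is contained in a chart of $Y$ over which $\srG$ is, near the identity, just a union of graphs of étale partial automorphisms (this is exactly the local structure recorded before Proposition \ref{cpt-nbd}). Next I would observe that any lifting $f':Z'\to X'$ of $f$ along an étale $e'$ is determined, as a germ along its image, by the germ of $f$ in $Y$ together with the connected component of $\srG$ through which $e'$ is trivialized: étale maps are local isomorphisms, so two liftings of the same germ along the same étale map agree on germs, and the transition functions $\rho_{ij}$ of the section pull back uniquely because $s,t$ are étale. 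This produces the desired germ isomorphism over a neighborhood of each point of $K$.

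Finally I would glue: the germ isomorphisms constructed pointwise are unique where they are defined (again by étaleness of $s,t$: a morphism of groupoid-liftings over a connected base is rigid), so they agree on overlaps and patch to a single isomorphism $X'\cong X''$ over some open $Z'''\supset K$ compatible with $e',e''$ and with the liftings. The main obstacle I expect is purely bookkeeping rather than conceptual: one must be careful that ``up to reducing the size of the neighborhoods'' is used honestly — the isomorphism is only claimed on a possibly smaller neighborhood of $K$ — and that the étale trivializations of $\srG$ are chosen consistently along $K$, which is where compactness of $K$ enters (a finite subcover). No genuinely hard analysis is needed; the content is that étaleness of $s,t$ makes the lifting problem rigid.
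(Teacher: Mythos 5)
Your overall strategy---compare the two solutions through $\srG$, and use \'etaleness of the structure maps plus compactness of $K$---is the same as the paper's, but the step you dismiss as bookkeeping is precisely where the content lies: passing from germ isomorphisms along the section to an isomorphism of actual neighborhoods. Rigidity (two liftings along an \'etale map into a separated target that agree at a point of a connected set agree on that set) only gives agreement of your local identifications on connected overlaps that \emph{meet} the image of the section. The overlaps $X'_z\cap X'_w$ of the neighborhoods you choose can have components disjoint from $f'(K)$, and there the two identifications may genuinely differ: the natural comparison object is the correspondence $X_3=\bigl(\srG\times_{Y\times Y}(X'\times X'')\bigr)\times_Z (Z'\cap Z'')$, which is \'etale over $X'$ and over $X''$ but multivalued in general, so ``finite subcover plus rigidity'' does not by itself produce a single-valued map from a neighborhood of $f'(K)$ in $X'$ to $X''$.

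This is exactly what the paper's proof supplies. It forms $X_3$ as above, observes that the section lifts to $f_3:Z_3\to X_3$ and that the \'etale projections $X_3\to X'$ and $X_3\to X''$ are injective on the compact set $f_3(K)$ (because $f'$ and $f''$ are sections over $Z$), and then applies Remark \ref{cpxNbd}: an \'etale map that is injective on a compact subset is an open embedding on some neighborhood of that subset. Shrinking $X_3$ to such a neighborhood gives the common reduction on which the two solutions are identified, and the uniqueness of the lifting to $\srG$ near $K$ gives ``up to unique isomorphism.'' Note that Remark \ref{cpxNbd} is not a finite-subcover statement; its proof requires a limiting argument (sequences $x_n\neq y_n$ with equal images accumulating on the compact set would contradict injectivity there together with local injectivity). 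If you insert this step---either by quoting Remark \ref{cpxNbd} applied to the projections of $X_3$, or by proving the corresponding tube-type statement directly---your argument becomes essentially the paper's proof.
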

\begin{proof} 
Suppose $(Z'', X'', e'')$ is another such. We may go to open coverings of $Z'$ and $X'$ (cutting the compact subset into a union of compact subsets in the open pieces of the covering of $Z'$) so
we may assume that the maps factor through 
$h: X' \rightarrow Y$ and $h'': X'' \rightarrow Y$. 
We have an etale map 
$$
(e',e''): X' \times X'' \rightarrow Y \times Y
$$
and $( f',f'') : Z' \cap Z'' \rightarrow X' \times X''$,  that lifts into a map
$$
Z'\cap Z'' \rightarrow  \srG \times _{Y\times Y} X' \times X''.
$$
Set $Z_3 := Z'\cap Z''$ and 
$X_3 := \left( \srG \times _{Y\times Y} X' \times X'' \right) \times _Z Z_3$. 
We note that by construction we get $f_3 :Z_3 \rightarrow X_3$ and either of the
projections $X_3 \rightarrow Y$ is a 
local homeomorphism.  The same is therefore true of the projections $X_3\rightarrow X'$ 
and $X_3 \rightarrow X''$.  

By Remark \ref{cpxNbd} below,  we may reduce the size of $X_3$ so that these maps give open
embeddings to $X'$ and $X''$. This provides the reduction of size on which our two maps agree.
The isomorphism of agreement is given by the lifting of $X_3$ into $\srG$.  Such a lifting is unique 
over a possibly smaller neighborhood since we are provided already with the map from $K$
into $\srG$. 
\end{proof}

\begin{remark}
\label{cpxNbd}
Suppose $p:X \rightarrow Y$ is an etale map between
manifolds, and $K\subset X$ is a compact subset. Suppose that $p|_K$ is injective. 
Then there exists an open neighborhood $K\subset U\subset X$ such that the map 
$p|_U : U \rightarrow Y$ is an open embedding.
\end{remark}

\begin{proof}[Proof of Proposition \ref{cpt-nbd}]
Given a solution $X'$ for a compact subset $K'\subset K$ we can add that to the collection
of charts for $Y$. In this way it is possible to proceed by induction, and reduce to the
case where there
are only two open sets: $Z=Z_1\cup Z_2$ and $K = K_1 \cup K_2$ with $K_i$ compact in $Z_i$,
and $f$ is defined by $f_i: Z_i \rightarrow Y_i:= p^{-1}(Z_i)$. 

Then $Y_1$ and $Y_2$ are both solutions to the problem for $K_{12}:= K_1\cap K_2$. By 
Lemma \ref{uniqueness} there is a manifold $W$ with 
open inclusions $\nu _i : W \stackrel{\cong}{\rightarrow} W_i \subset Y_i$,
with a map $W\stackrel{\rho}{\rightarrow} \srG$
relating the two resulting maps $W\rightarrow Y$, and 
a neighborhood $K_{12}\subset Z_W\subset Z$ and 
a section $f_W: Z_W\rightarrow W$ inducing $f_i : Z_W \rightarrow W_i\subset Y_i$. 

We have $\nu _1^{-1}(f_1(K_1)) \cap \nu _2^{-1}(f_2(K_2)) = f_W(K_{12})$. 
This is seen using the map $p:Y\rightarrow Z$, which
is $\srG$-invariant so $p\circ \nu _1 = p\circ \nu _2$ gives a map $W\rightarrow Z$. 
Any intersection point would have to map to a point in $K_{12}$,
and since it is in the image of the sections it would be in $f_W(K_{12})$.
Note that a statement analogous to the proposition isn't true if we don't include
the map to $Z$ in the hypothesis. 

Choose a relatively compact neighborhood 
$f_W(K_{12}) \subset W' \subset W$ and let $S'=\partial W'$ be its boundary, a compact
subset of $W$. Then $\nu _i^{-1}(f_i(K_i)) \cap S'$ 
are disjoint closed subsets of $S'$ because of the previous paragraph. 

It now follows that 
$\nu _i^{-1}(f_i(K_i)) \cap S'$ are separated by disjoint open neighborhoods
whose complements are two closed, hence compact subsets $T_i$ covering $S'$ such that 
$\nu _i^{-1}(f_i(K_i)) \cap T_i = \emptyset$. 

Put $X'_i:= Y_i - \nu _i(T_i)$.
This is an open neighborhood of $f_i(K_i)$ with $W'_i \subset X'_i$. 
Let $X'$ be the pushout of $X'_1 \leftarrow W' \rightarrow X'_2$. 

We claim that $X'$ is separated. Suppose given a sequence of points that we may assume in $W'$, 
having
limit points in $X'_1$ and $X'_2$.
Since $W'_i$ is relatively compact in $W_i$, the sequences limit to a unique
point of $\overline{W}'$ in $W$. Since $Y_i$ are separated and contain copies of $W$,
the limit points in $X'_i$ correspond in $Y_i$ to this point of $\overline{W}'$. If the point
is in the boundary $S'$, then since it is in both $X'_i$, it has to be in the complement 
of each $T_i$, a contradiction. 
Thus, the limit is in $W'$ so it is unique in $X'$, completing the proof that $X'$ is separated.

Now, the separated pushout is a manifold and gives the required space for the proposition. 
\end{proof}

\subsection{Mixed twistor structure on the completion}

In this subsection we complete the proof of the last part of Theorem \ref{maintheorem}, namely
the construction of the mixed twistor structure \cite{twistor} on the formal completion. Once we have a
separated and smooth local neighborhood, and once we know the mixed twistor
structure on the normal bundle, the mixed twistor structure on the formal
completion follows. 

Consider a preferred section $\rho : \pp^1 \rightarrow  
(\widetilde{M}_{DH},\srG_{DH})$ corresponding by Theorem \ref{twistorsections} to
a harmonic bundle satisfying Hypothesis \ref{hypHB}.  By Proposition \ref{cpt-nbd}
there is a separated complex
analytic space with etale map $M \rightarrow (\widetilde{M}_{DH},\srG_{DH})$
and a section projecting to the preferred section, that 
we'll also denote $\rho : \pp^1 \rightarrow M$.
Let ${\mathfrak m}\subset  \Oo _{M }$ 
denote the ideal defining the image of $\rho$ in $M$.  For any $n$
we have a coherent sheaf of $\Oo _{\pp^1}$-algebras over $\pp^1$
$$
A_n := \Oo _{M } / {\mathfrak m}^{n+1},\;\;\;\; A_0 =\Oo _{\pp^1}.
$$
These fit together in an inverse system 
$\widehat{A} := \lim _{\leftarrow} A_n$ to give the formal completion of the
neighborhood along the section $\rho$.  This may be viewed as a pro-object in the category
of sheaves of algebras over $\pp^1$.  We denote also by ${\mathfrak m}\subset \widehat{A}$
the kernel of $\widehat{A}\rightarrow A_0$.

The group $GL_r(\cc )$ acts on $\widetilde{M}_{DH}$
by change of framing. Let $N\subset M$ be the orbit of $\rho$ under 
this action. Locally around $\rho$ it looks like $\pp^1$ times a small neighborhood of the
identity in $GL_r(\cc )$. 
Theorem \ref{twistorproperty} gives a filtration
$$
0 \subset T_{\rho}N/\pp^1 = W_0 \subset W_1 \subset 
W_2 = T_{\rho}M/\pp^1
$$
that extends, via the action of $GL_r(\cc )$ on framings, 
to a filtration 
$$
0 \subset T(N/\pp^1) =W_{0,N} \subset W_{1,N} \subset W_{2,N} = T(M/\pp^1) |_N 
$$
with the $W_{1,N}$ part characterized
as the deformations that fix the eigenvalues of the residules. 

Let $I\subset \widehat{A}$ be the ideal of $N$.
Let $J\subset I$ be the ideal of functions whose derivatives 
at points of $N$ in the directions
of $W_{1,N}$ vanish. 
On each of the quotient pieces 
$A_n:= \widehat{A}/ {\mathfrak m}^{n+1}$ define the weight filtration by 
$$
W_{-k}A_n = \sum _{q+2r \geq k} I^q J^r A_n.
$$
Consider the decreasing filtration $U= \{ U^d\}$ of $A_n$
defined by $U^d = {\mathfrak m}^d$. The filtration $W_{\cdot}$ induces
a filtration on ${\rm gr}^d_U$ and $A_n$ is a successive extension
of these filtered objects in a family over $\pp^1$. 
Furthermore,
$$
{\rm Sym}^d {\rm gr}^1_U \stackrel{\cong}{\longrightarrow}
{\rm gr}^d_U.
$$
We claim that this expression is an isomorphism of filtered bundles over 
$\pp^1$. 

To see this, work in local coordinates
$(\lambda , x_1,\ldots , x_a,y_1,\ldots , y_b,z_1,\ldots , z_c)$
such that 
$$
{\mathfrak m} = (x_1,\ldots , x_a, y_1,\ldots , y_b,z_1,\ldots , z_c),
\;\;\; 
I = (y_1,\ldots , y_b,z_1,\ldots , z_c),
$$
and 
$J = I^2 +(z_1,\ldots , z_c)$.
In these coordinates, 
$$
W_{0,N} = \langle  \frac{\partial}{\partial x_i} \rangle 
\;\;\; \mbox{ and } \;\;\; 
W_{1,N} = \langle  \frac{\partial}{\partial x_i},  \frac{\partial}{\partial y_j}\rangle  .
$$
Roughly speaking, the coordinates $x_i$ have weight $0$, the coordinates $y_j$ 
have weight $-1$, and the $z_k$ have weight $-2$. 
Then $W_{-k}A_n$ is generated by monomials 
$x^P y^Q z^R$
where 
$$
P=(p_1,\ldots , p_a), \;\; 
Q=(q_1,\ldots , q_b), \;\; 
R=(r_1,\ldots , r_c)
$$
with
$\sum q_i + 2 \sum r_j \geq k$ and $\sum p_i + \sum q_i + \sum r_ i \leq n$. 
Similarly, $U^d$ is spanned by monomials
of total degree $\geq d$. 

Locally, $W_{-k}+ U^d$ is spanned by 
monomials $x^P y^Q z^R$ for $P,Q,R$ as above, with
\newline
$\sum q_i + 2 \sum r_j \geq k$ and  $\sum p_i + \sum q_i + \sum r_ i \geq d$.
The filtrations $W$ and $U$ intersect strictly
relative to the $\lambda \in \pp^1$ coordinate, so 
the filtrations commute. Thus
$$
W_{-k}({\rm gr}^d_U) =
W_{-k}U^d / W_{-k} U^{d+1}
$$
and this a locally free sheaf over $\pp^1$ with
basis given  by monomials of the form 
$x^P y^Q z^R$ for 
$\sum q_i + 2 \sum r_j \geq k$ and  $\sum p_i + \sum q_i + \sum r_ i = d$.
This is the same expression as for the induced filtration on 
${\rm Sym}^d {\rm gr}^1_U$, showing that the muliplication map
is an isomorphism of filtered objects. 

Now, the mixed twistor property of ${\rm gr}^1_U$, which is
equivalent to the mixed twistor property of Theorem \ref{twistorproperty} 
for the weight filtration on
$ T_{\rho}M/\pp^1$, implies the mixed twistor property for 
the symmetric powers, and hence for 
${\rm gr}^d_U$. 

As the $W$-filtered bundle 
$A_n$ is a successive extension of the $W$-filtered bundles ${\rm gr}^d_U$,
this implies that $A_n$ is a mixed twistor structure. This completes the proof of
Theorem \ref{maintheorem}.

\bigskip

\noindent
{\bf Acknowledgements:}
I would like to thank the many people have contributed significantly to this work by their research articles and illuminating discussions. 
This work was supported by a grant from the Institute for Advanced Study, by
the Agence Nationale de la Recherche program 
3ia Côte d'Azur ANR-19-P3IA-0002,  and by the European Research Council Horizons 2020 grant 670624 (Mai Gehrke's DuaLL project).  This work was completed during a visit to the University of Pennsylvania
supported by the Simons Collaboration on Homological Mirror Symmetry.

\

\noindent
Université Côte d'Azur, CNRS, LJAD, France

\noindent
\verb+carlos.simpson@univ-cotedazur.fr+

\end{document}